\theoremstyle{plain}
\newtheorem{thm}{Theorem}[section]
\newtheorem{lemma}[thm]{Lemma}
\newtheorem{corollary}[thm]{Corollary}
\newtheorem{remark}[thm]{Remark}
\numberwithin{equation}{section}
\numberwithin{algorithm}{section}
\def\bv{\mathbf{v}}
\begin{document}

\begin{frontmatter}

\title{Bilinear Control of Convection-Cooling: From Open-Loop to Closed-Loop 
\tnoteref{mytitlenote}}
\tnotetext[mytitlenote]{Acknowledgment }

\author{Weiwei Hu\corref{cor1}}
\address{Department of Mathematics, University of Georgia, Athens, GA 30602, USA }
\cortext[cor1]{Corresponding author}
\ead{Weiwei.Hu@uga.edu}
\author{Jun Liu\corref{}}
\ead{juliu@siue.edu}
\address{Department of Mathematics and Statistics, Southern Illinois University Edwardsville,
 Edwardsville, IL 62025, USA}
\author{Zhu Wang\corref{}}
\ead{wangzhu@math.sc.edu}
\address{Department of Mathematics, University of South Carolina, Columbia, SC 29208, USA}



\begin{abstract}
This paper is concerned with a bilinear  control problem for enhancing convection-cooling  via an incompressible velocity field.  Both optimal open-loop control and closed-loop feedback control designs are addressed.  First and second order optimality conditions  for characterizing the optimal solution are discussed.    In particular, the method of instantaneous control is applied to establish the feedback laws. 
Moreover, the construction of feedback laws is also  investigated by directly utilizing the optimality system  with appropriate  numerical  discretization schemes.  
Computationally, it is much easier to implement  the closed-loop feedback control than the optimal open-loop  control, as the latter requires to solve the state equations  forward in time, coupled with  the adjoint equations  backward in time  together with a nonlinear optimality condition. 
Rigorous analysis and
numerical experiments are presented  to demonstrate our ideas and  validate the  efficacy of the control designs.
\end{abstract}

\begin{keyword}
Convection-cooling, bilinear control, optimality conditions,  instantaneous control, feedback law
\end{keyword}

\end{frontmatter}


\section{Introduction}

	 The question of the  influence  of advection on diffusion is  a topic of fundamental interest in engineering and natural sciences with broad applications ranging from heat transfer, chemical mixing   on small and large scales, to preventing the spreading of pollutants in geophysical flows. 
Convection-cooling is the mechanism where heat is transferred from a hot object into the ambient air or liquid. %
In general, there are two types of convectional cooling:  natural convection cooling and the forced air convection cooling (cf.~\cite{bergman2011introduction,  bejan2013convection, kreith2012principles}).
The latter is used in designs where the enclosures or environment do not offer an effective natural cooling performance. 
In this work, we are aiming at understanding 
what flows are efficient  in enhancing cooling or the homogenization process and whether it is possible to construct such flows by utilizing  the information of the temperature only.  Specifically,  we are interested in  the  control designs  for convection-cooling via  incompressible fluid flows. To this end, we consider the diffusion-convection model 
for a cooling application in an open bounded and connected domain  $\Omega\subset \mathbb{R}^d, d=2,3$,  with a Lipschitz   boundary  $\Gamma$.  The system of equations reads
  \begin{align}
	\frac{\partial T}{\partial t}&=\kappa \Delta T-\bv \cdot \nabla T\quad \text{in}\quad \Omega, \label{sta_T}\\
	\nabla \cdot \bv&=0 \quad \text{in}\quad \Omega, \label{sta_v}
\end{align}
where  $T$ is the  temperature,  $\kappa>0$  is  the thermal diffusivity, and $\bv$ is a divergence free vector field.
Neumann boundary condition for temperature and  no-slip boundary condition for  velocity are considered, i.e., 
 \begin{align}
 \frac{\partial T }{\partial n}\Big|_{\Gamma} =0\quad \text{and}\quad   \bv |_{\Gamma}=0. \label{BC}
\end{align}	
The initial condition is given by 
 \begin{align}
T(x, 0)=T_0(x). \label{IC}
\end{align}

The diffusion-convection model  \eqref{sta_T}--\eqref{IC} is one of the most studied PDEs in both mathematical and physical literature. Of special note is that the flow velocity  will be taken as the control input in this work. This  naturally leads to a bilinear control problem.   In particular, we like to understand what is the optimal flow velocity that accelerates the convergence of the temperature to its average, and construct such velocity in a feedback form.  Constantin {\it et al.} in \cite{constantin2008diffusion} provided a sharp  characterization of incompressible flows
that  produce a significantly stronger dissipative effect than dissipation alone. 
However, constructing an optimal  velocity field in a feedback form is non-trivial. One of the well-known approaches is to solve the related Hamilton-Jacobi-Bellam (HJB)  differential equation, yet  it suffers the curse of dimensionality.  In this work,  we are aiming at investigating a feasible  nonlinear  feedback control law  for convection-cooling based on  the instantaneous control design  and establish the corresponding  stabilization  results.  The fundamental idea of instantaneous control   is built upon an optimal control problem, which essentially gives rise to a sub-optimal feedback law. Moreover, we also investigate the construction of  feedback laws directly  utilizing  the  discretized optimality conditions. As a first step to implement the feedback control  design,  we start with an optimal control problem 
 seeking  for a velocity  field  that minimizes   the variance of the temperature distribution. The problem can be formulated as follows:   find an incompressible velocity  $\bv$ that   minimizes
 \begin{align*}
J(\bv)&=\frac{\alpha}{2}\|T(x, t_f)-\langle T(x,t_f)\rangle\|^2_{L^2}+\frac{\beta}{2}\int^{t_f}_0\|T-\langle T\rangle\|^2_{L^2}\, dt
 +\frac{\gamma}{2}\|\bv\|^2_{U_{\text{ad}}} \tag{P} \label{eq:p}
 \end{align*}
for a given $t_f>0$,  subject to \eqref{sta_T}--\eqref{IC}, where 
$\langle T\rangle=\frac{1}{|\Omega|} \int_{\Omega} T\, dx$
stands for the spatial average of temperature,
 $\alpha, \beta \geq 0$ and $ \gamma>0$ are the state and control weight parameters, respectively,   and $U_{\text{ad}}$  stands for the set of admissible control.  The parameters   $\alpha$ and  $\beta$ do  not vanish simultaneously.

   For the convenience  of our discussion,  we first introduce the following spaces
\begin{align*} 
H&=\{\bv \in L^2(\Omega)\colon \nabla \cdot \bv=0, \ \bv\cdot n|_{\Gamma}=0\},\qquad
V =\{\bv \in H^1_0(\Omega)\colon \nabla \cdot \bv=0\}.
\end{align*}
The most relevant work on optimal control of the scalar field via incompressible fluid flows  can be found in (cf.~\cite{ liu2008mixing,  barbu2016optimal, hu2017enhancement,  hu2018optimal,  hu2018boundary, hu2018boundarySICON, hu2018approximating, hu2019approximating, glowinski2021bilinear, he2021optimal}),  with applications to heat transfer,  fluid  mixing and optical  flow control problems.
 Due to the advection term $\bv\cdot \nabla T$, the control-to-state map  $\bv\mapsto T$ is bilinear, and  hence problem \eqref{eq:p} becomes  non-convex and the optimal solution may not be unique in general. The choice  of $U_{\text{ad}}$   plays a key role in proving the existence of an optimal solution and deriving the optimality conditions.  
 Establishing the  existence of an optimal velocity field  will involve  a  compactness argument associated with the control-to-state map.   To obtain a steady flow, Liu in \cite{liu2008mixing} penalized the magnitude of the time derivative of $\bv$ in the cost functional, however, this resulted in a nonlinear wave type of optimality conditions, which are difficult  to implement numerically. 
Barbu and Marinoschi in \cite{barbu2016optimal} 
showed the existence of an optimal solution for $\bv\in L^2(0, t_f; H)$,
yet the challenge was encountered in deriving the first order optimality conditions. For $\bv\in L^2(0, t_f; H)$,  it  is not smooth enough to allow the differentiability of the state equations. 
Consequently, the variational inequality or the Euler-Lagrange method   can not be directly applied. Instead, an approximating control approach was employed in   \cite{barbu2016optimal}, which first considered the  velocity in a much smoother space and then showed the convergence of the optimality conditions  for  the approximating control problem to the original one.  Moreover, as shown in  \cite[Theorem 6]{barbu2016optimal}, if further assume that  $\bv\in L^\infty(0, t_f; L^{\infty}(\Omega)\cap H)$, then the uniqueness of the optimal controller can be   obtained by showing the uniqueness of the optimality system under certain conditions. Similar ideas have been adopted  in (cf.~\cite{hu2018approximating, hu2019approximating}).  
 A recent work  by Glowinski {\it et al.} in \cite{glowinski2021bilinear} has conducted a numerical study on optimization algorithms for solving  problem \eqref{eq:p}.

Motivated by the need of reducing  the effects of rotation on the flow and the shear stress at the boundary in the cooling process, in this work  we are interested in minimizing  the magnitude of the strain tensor  (cf.~\cite{foias2001navier, liu2008mixing}), which is equivalent to minimize   $\|\nabla \bv\|_{L^2}$. 
In this case, we set
  \begin{align}
  U_{\text{ad}}= L^2(0, t_f;   V)
  \label{U}
\end{align} 
equipped with the norm
$\|\bv\|_{U_{\text{ad}}}=\|\bv\|_{L^2(0, t_f; H^1(\Omega))}.$
The  regularity of  $U_{\text{ad}}$ defined by \eqref{U} will allow  us  to carry out the G$\hat{a}$teaux  differentiability of the state equations.   Then  the optimality conditions can be established by directly employing   a variational inequality  or the Euler-Lagrange method.
However,  to numerically implement  the resulting optimality system for problem \eqref{eq:p}, one has to solve the state equations  forward in time, coupled with  the adjoint equations  backward in time 
together with a nonlinear optimality condition.  Straightforward use of this result  can result in extremely   high computational costs.  
Instantaneous control design is  a powerful tool for dealing  with the computational   limitations of open-loop control,  while providing a feedback law for flow control problems at a sustainable control cost (cf.~\cite{choi1993feedback, hinze2002analysis, hinze2000optimal, chang1999active, choi1999instantaneous, unger2001fast}).      The idea behind it is that it successively determines approximations of the objective function while marching forward in time.  The uncontrolled dynamical system is first discretized in  time. Then, at selected time slices an instantaneous version of the cost functional is approximately minimized subject to a stationary system, whose structure depends on the chosen discretization method. The control obtained is used to steer the system to the next time slice, where the procedure is repeated (cf.~\cite{hinze2002analysis}). 
This method  is closely tied to receding horizon control (RHC) or model predictive control (MPC) with finite time horizon  (cf.~\cite{garcia1989model, nevistic1997finite, rawlings1993stability, bewley2001dns}).   
Essentially,  instantaneous control is a discrete-in-time and suboptimal
 feedback control approach 
 and  can be interpreted as the stable time discretization of a closed-loop control law (cf.~\cite{choi1995suboptimal, lee1998suboptimal, hinze2002analysis, hinze2000optimal,min1999suboptimal, hinze1998control}).
On the other hand, given the optimality system, it is natural  to ask whether it is possible to obtain the equivalent   feedback laws by  first solving it  restricted to each time slice and then marching forward in time.  Following  the convention, without any ambiguity, we will  call the former  ``discretize-then-optimize (DTO)" approach and the latter ``optimize-then-discretize (OTD)"  approach in this work.

The remainder of this paper is organized as follows.  In section \ref{opt_design},    the first  order optimality  conditions are established  for  solving an optimal solution  using a variational inequality (cf.~\cite{ lions1971optimal}). 
Then the second order necessary conditions are  derived to charactering the solution
when the control weight $\gamma$ is sufficiently large.
In section \ref{feedback},  the feedback control  is constructed using both  DTO and OTD approaches, which turn out to be   the same feedback law under  appropriate discretization schemes.  The well-posedness and asymptotic behavior  of the closed-loop system will be also  addressed.  
  Numerical implementation of our control designs are presented in section \ref{Num}, 
where several numerical experiments  are conducted  to compare  the effectiveness of the optimal control and the feedback control  for convection-cooling. 

In the sequel, the symbol $C$ denotes a generic positive constant, which is allowed to depend on the domain as well as on indicated {parameters without ambiguous.}

\section{Existence and Optimality Conditions} \label{opt_design}
In this section, we discuss   the existence of an optimal solution to problem \eqref{eq:p} and derive the first and second order optimality conditions for characterizing  the optimal control by utilizing  a variational inequality (cf.~\cite{lions1971optimal}). \begin{thm}\label{thm1} 
For $T_0\in L^\infty(\Omega)$, there exists at least  one optimal  solution $\bv\in U_{\text{ad}}$ to  problem \eqref{eq:p}.
 \end{thm}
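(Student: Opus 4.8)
The plan is to argue by the direct method of the calculus of variations. First I would record that for each admissible $\bv\in U_{\text{ad}}=L^2(0,t_f;V)$ the state system \eqref{sta_T}--\eqref{IC} admits a unique weak solution $T=T(\bv)$, so that the control-to-state map and hence the reduced functional $J$ are well defined; this follows from standard parabolic theory since the advection coefficient $\bv$ lies in $L^2(0,t_f;H^1(\Omega))$. Since $J\ge 0$, the infimum $m:=\inf_{\bv\in U_{\text{ad}}}J(\bv)$ is finite, and I would select a minimizing sequence $\{\bv_n\}\subset U_{\text{ad}}$ with $J(\bv_n)\to m$. The regularization term $\frac{\gamma}{2}\|\bv\|_{U_{\text{ad}}}^2$ with $\gamma>0$ supplies coercivity, so $\{\bv_n\}$ is bounded in the Hilbert space $U_{\text{ad}}$; extracting a subsequence (not relabeled) I obtain $\bv_n\rightharpoonup \bv^*$ weakly in $U_{\text{ad}}$. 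It remains to show that $\bv^*$ is admissible and optimal, which reduces to passing to the limit in the state equation and establishing the weak lower semicontinuity of $J$.

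The decisive preliminary step is to derive uniform a priori bounds on the states $T_n:=T(\bv_n)$, and here the hypothesis $T_0\in L^\infty(\Omega)$ is essential. Because $\bv_n$ is divergence free and vanishes on $\Gamma$, a maximum-principle argument yields the uniform bound $\|T_n\|_{L^\infty(Q)}\le \|T_0\|_{L^\infty(\Omega)}$, where $Q=\Omega\times(0,t_f)$. Testing \eqref{sta_T} with $T_n$ and using that the advection term is conservative, $\int_\Omega (\bv_n\cdot\nabla T_n)\,T_n\,dx=0$, gives the standard energy estimate bounding $\{T_n\}$ uniformly in $L^\infty(0,t_f;L^2(\Omega))\cap L^2(0,t_f;H^1(\Omega))$. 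For the time derivative I would test against $\phi\in H^1(\Omega)$ and integrate the advection term by parts, $\langle \bv_n\cdot\nabla T_n,\phi\rangle=-\int_\Omega T_n\,(\bv_n\cdot\nabla\phi)\,dx$, which together with the $L^\infty$ bound gives $\|\bv_n\cdot\nabla T_n\|_{(H^1)'}\le C\|T_n\|_{L^\infty}\|\bv_n\|_{L^2}$; hence $\{\partial_t T_n\}$ is bounded in $L^2(0,t_f;(H^1(\Omega))')$. The compact embedding $H^1(\Omega)\hookrightarrow\hookrightarrow L^2(\Omega)$ and the Aubin--Lions lemma then deliver a further subsequence with $T_n\to T^*$ strongly in $L^2(Q)$, $T_n\rightharpoonup T^*$ weakly in $L^2(0,t_f;H^1(\Omega))$, and $T_n(t_f)\rightharpoonup T^*(t_f)$ weakly in $L^2(\Omega)$.

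The main obstacle is passing to the limit in the nonlinear (bilinear) advection term, where one factor converges only weakly. I would handle it in the integrated-by-parts form: for a fixed smooth divergence-free test function $\phi$, the term $-\int_0^{t_f}\int_\Omega T_n\,(\bv_n\cdot\nabla\phi)\,dx\,dt$ is a pairing of the \emph{strongly} convergent sequence $T_n\to T^*$ in $L^2(Q)$ against the \emph{weakly} convergent sequence $\bv_n\rightharpoonup\bv^*$ in $L^2(Q)$, so the weak--strong product passes to the limit. Since the linear terms pass to the limit by weak convergence alone, letting $n\to\infty$ in the weak formulation shows that $T^*$ solves \eqref{sta_T}--\eqref{IC} with velocity $\bv^*$; by uniqueness $T^*=T(\bv^*)$, and in particular $\bv^*\in U_{\text{ad}}$ is admissible.

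Finally I would verify the weak lower semicontinuity of $J$ at $\bv^*$. The running-cost term $\frac{\beta}{2}\int_0^{t_f}\|T_n-\langle T_n\rangle\|_{L^2}^2\,dt$ converges by the strong $L^2(Q)$ convergence of $T_n$, since $T\mapsto T-\langle T\rangle$ is a bounded linear map; the terminal term $\frac{\alpha}{2}\|T_n(t_f)-\langle T_n(t_f)\rangle\|_{L^2}^2$ is weakly lower semicontinuous using $T_n(t_f)\rightharpoonup T^*(t_f)$; and the regularization term is convex and continuous on $U_{\text{ad}}$, hence weakly lower semicontinuous, so $\|\bv^*\|_{U_{\text{ad}}}^2\le\liminf_n\|\bv_n\|_{U_{\text{ad}}}^2$. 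Combining these gives $J(\bv^*)\le\liminf_n J(\bv_n)=m$, and since $\bv^*\in U_{\text{ad}}$ the reverse inequality is automatic, so $J(\bv^*)=m$ and $\bv^*$ is an optimal solution. The one subtlety deserving care is the passage to the limit at the terminal time $t_f$, for which I would invoke the weak continuity of the evaluation map on $\{u\in L^2(0,t_f;H^1(\Omega)):\partial_t u\in L^2(0,t_f;(H^1(\Omega))')\}\hookrightarrow C([0,t_f];L^2(\Omega))$.
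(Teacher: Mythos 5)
Your proof is correct and follows essentially the same route the paper relies on: the paper itself omits the details and simply invokes the direct-method/compactness argument of \cite[Theorem 1]{barbu2016optimal}, which is precisely your scheme (minimizing sequence with coercivity from the $\gamma$-term, uniform $L^\infty$ and energy bounds on the states, Aubin--Lions compactness, weak--strong passage to the limit in the bilinear advection term, and weak lower semicontinuity of $J$). The only blemish is that you call the scalar test function $\phi$ ``divergence-free,'' which is vacuous for a scalar and should simply be dropped; nothing in your limit argument uses it.
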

 The proof of the existence  for $\bv\in U_{\text{ad}}$ follows the similar approaches as in   \cite[Theorem 1]{barbu2016optimal} for $\bv\in L^2(0, t_f; H)$. The details are omitted here. 
 To establish the optimality conditions, however, it is critical to understand the regularity properties of the solution to the state equations for  $\bv\in U_{\text{ad}}$. 
 
 The following  results  will be often used  in this work.  The detailed proof of next lemma can  be found in (cf.~\cite{Te1997}).
 \begin{lemma}\label{lem0}
Let $\mathbf{v}\in L^2(0; t_f; H^1(\Omega), d=2,3$, $\phi\in L^2(0; t_f; H^1(\Omega))$, and  $\psi\in H^1(\Omega)$.
Then  we have 
\begin{align}
&\left|\int_{\Omega} (\mathbf{v} \cdot \nabla \phi) \psi\,dx\right|\leq \|\mathbf{v}\|_{L^4}\|\nabla \phi\|_{L^2}\|\psi\|_{L^4}
\leq C\|\nabla \mathbf{v}\|_{L^2}\|\nabla \phi\|_{L^2}\|\nabla \psi\|_{L^2},  \label{1EST_tri}
\end{align}
and hence,  
\begin{align}
\bv\cdot \nabla \phi\in L^1(0, t_f; (H^1(\Omega))').\label{EST_force}
\end{align}
Moreover, if $\nabla \cdot \mathbf{v}=0$ and $\mathbf{v}|_{\Gamma}=0$, then 
\begin{align}
\int_{\Omega} (\mathbf{v} \cdot \nabla \phi) \psi\,dx=-\int_{\Omega} \phi  \mathbf{v} \cdot \nabla\psi\,dx.
\label{2EST_tri}
\end{align} 
\end{lemma}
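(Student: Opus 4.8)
The plan is to establish the three assertions \eqref{1EST_tri}, \eqref{EST_force}, and \eqref{2EST_tri} in turn, the main tools being H\"older's inequality, the Sobolev embedding in low dimension, and a density-based integration-by-parts argument. For \eqref{1EST_tri} I would start from the pointwise bound $|(\mathbf{v}\cdot\nabla\phi)\psi|\le|\mathbf{v}|\,|\nabla\phi|\,|\psi|$ and apply H\"older's inequality with the exponent triple $(4,2,4)$, whose reciprocals sum to one, giving the first inequality with factors $\|\mathbf{v}\|_{L^4}$, $\|\nabla\phi\|_{L^2}$, $\|\psi\|_{L^4}$. The second inequality then follows from the continuous embedding $H^1(\Omega)\hookrightarrow L^4(\Omega)$, valid for $d=2,3$ (indeed for $d\le4$); equivalently one may invoke the Ladyzhenskaya interpolation inequalities, namely $\|u\|_{L^4}^2\le C\|u\|_{L^2}\|\nabla u\|_{L^2}$ when $d=2$ and $\|u\|_{L^4}^4\le C\|u\|_{L^2}\|\nabla u\|_{L^2}^3$ when $d=3$. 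Combining this with a Poincar\'e inequality $\|u\|_{L^2}\le C\|\nabla u\|_{L^2}$, available on the subspaces carrying a homogeneous boundary or zero-mean constraint, controls each $L^4$ factor by a constant multiple of the corresponding gradient $L^2$ norm and yields the stated bound.

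For \eqref{EST_force} I would reinterpret \eqref{1EST_tri}: for almost every $t$ the functional $\psi\mapsto\int_\Omega(\mathbf{v}\cdot\nabla\phi)\psi\,dx$ is linear and bounded on $H^1(\Omega)$, with operator norm at most $C\|\nabla\mathbf{v}(t)\|_{L^2}\|\nabla\phi(t)\|_{L^2}$, so that $\mathbf{v}\cdot\nabla\phi$ lies in $(H^1(\Omega))'$ for a.e. $t$. Integrating this dual-norm bound over $(0,t_f)$ and applying the Cauchy--Schwarz inequality in time gives $\int_0^{t_f}\|\mathbf{v}\cdot\nabla\phi\|_{(H^1)'}\,dt\le C\|\nabla\mathbf{v}\|_{L^2(0,t_f;L^2)}\|\nabla\phi\|_{L^2(0,t_f;L^2)}<\infty$, which is exactly membership in $L^1(0,t_f;(H^1(\Omega))')$.

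For the integration-by-parts identity \eqref{2EST_tri} I would first verify it for smooth $\mathbf{v},\phi,\psi$ and then extend to the stated regularity by density, using \eqref{1EST_tri} to keep the trilinear terms under control throughout the limit. Writing the integrand componentwise and integrating by parts produces a boundary term $\int_\Gamma(\mathbf{v}\cdot n)\phi\psi\,dS$, which vanishes because $\mathbf{v}|_\Gamma=0$, together with $-\int_\Omega\phi\,\partial_i(v_i\psi)\,dx$; expanding $\partial_i(v_i\psi)=(\nabla\cdot\mathbf{v})\psi+\mathbf{v}\cdot\nabla\psi$ and dropping the first summand by incompressibility $\nabla\cdot\mathbf{v}=0$ delivers the claim. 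The step I expect to require the most care is this density passage: each integral must be known to be finite before the terms may be split and recombined, and it is precisely the continuity estimate \eqref{1EST_tri} that licenses this, so the bookkeeping of which function is approximated in which norm is where I would be most careful.
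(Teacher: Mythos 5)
Your proof is correct and coincides with the standard argument: the paper itself gives no proof of this lemma but defers to Temam's book, where \eqref{1EST_tri} is obtained by exactly your H\"older--Sobolev (Ladyzhenskaya) chain, \eqref{EST_force} by the same duality-plus-Cauchy--Schwarz-in-time step, and \eqref{2EST_tri} by the same integration-by-parts and density argument. Your caveat about Poincar\'e is the right one to flag, since the second inequality in \eqref{1EST_tri} as literally stated fails for constant $\psi$ (or constant $\mathbf{v}$) and is valid only in the setting where the paper actually applies it (namely $\mathbf{v}\in V$, with zero trace); for the duality claim \eqref{EST_force} one simply bounds $\|\psi\|_{L^4}$ by $C\|\psi\|_{H^1}$ instead of $C\|\nabla\psi\|_{L^2}$, which leaves your time-integration step unchanged.
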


In addition,  since the velocity field is incompressible with no-slip boundary condition,  it is easy to check  that given  zero Neumann boundary condition,  the average of the temperature satisfies 
\begin{align}
\langle T\rangle=\langle T_0\rangle, \quad \forall t\in[0, t_f]. \label{ave_T}
\end{align} 
 In fact, taking the integral  of \eqref{sta_T} over $\Omega$  and applying Stokes formula  (cf.~\cite{Te1997}) together with \eqref{sta_v}--\eqref{BC} yields
 \begin{align*}
	\frac{d}{d t}\left(\int_{\Omega}T\,dx \right)&=\kappa\int_{\Omega} \Delta T\,dx-\int_{\Omega}\bv \cdot \nabla T\,dx
	=\kappa\int_{\Gamma} \frac{\partial T}{\partial n}\,dx-\int_{\Gamma}( \bv \cdot n)  T\,dx+\int_{\Omega}(\nabla \cdot \bv )T\,dx =0,
	\end{align*}
and therefore \eqref{ave_T} follows. 
	
\begin{lemma} \label{lem1}
  Let  $T_0\in L^\infty(\Omega)\cap H^1(\Omega)$. 
  For $\bv\in U_{\text{ad}}$,    there exists a unique solution  
to  the state equations  \eqref{sta_T}--\eqref{BC}, which satisfies  
\begin{align}
T\in (L^\infty(0, t_f; L^\infty(\Omega) \cap H^1(\Omega)) \cap L^2(0, t_f; H^2(\Omega)). \label{EST_TH3}
\end{align}


   \end{lemma}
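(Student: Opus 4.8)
The plan is to construct the solution by a Galerkin scheme based on the eigenfunctions $\{w_k\}$ of the Neumann Laplacian, to derive a hierarchy of a priori estimates that are uniform in the discretization level, and then to pass to the limit; uniqueness will follow from a short energy argument. First I would set up the finite-dimensional approximations $T_m=\sum_{k=1}^m g_k^m(t)w_k$ solving the projected system, which exist on $[0,t_f]$ by Carath\'{e}odory theory once the a priori bounds below are in hand.

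The core of the proof is a sequence of energy estimates. Testing the equation with $T$ and using \eqref{2EST_tri} (with $\phi=\psi=T$) to annihilate the convection term yields $\frac{1}{2}\frac{d}{dt}\|T\|_{L^2}^2+\kappa\|\nabla T\|_{L^2}^2=0$, so that $T\in L^\infty(0,t_f;L^2(\Omega))\cap L^2(0,t_f;H^1(\Omega))$ with a bound depending only on $\|T_0\|_{L^2}$. Next, testing the weak formulation with $(T-M)^{+}$, and with the analogous truncation from below for $M=\|T_0\|_{L^\infty}$---where the convection contribution again vanishes because $\mathbf v$ is divergence free with $\mathbf v|_\Gamma=0$---gives the maximum principle $\|T(t)\|_{L^\infty}\le\|T_0\|_{L^\infty}$, that is, $T\in L^\infty(0,t_f;L^\infty(\Omega))$.

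For the higher regularity I would test with $-\Delta T$. Integration by parts against the homogeneous Neumann data turns the left-hand side into $\frac12\frac{d}{dt}\|\nabla T\|_{L^2}^2$ and produces the good term $\kappa\|\Delta T\|_{L^2}^2$, leaving the convection contribution $\int_\Omega(\mathbf v\cdot\nabla T)\Delta T\,dx$. Integrating by parts once more and using $\nabla\cdot\mathbf v=0$, $\mathbf v|_\Gamma=0$ to cancel the piece $\int_\Omega v_i\,\partial_{ij}T\,\partial_j T\,dx=\int_\Omega \mathbf v\cdot\nabla(\tfrac12|\nabla T|^2)\,dx=0$, this reduces to $-\int_\Omega(\nabla\mathbf v){:}(\nabla T\otimes\nabla T)\,dx$, which is bounded by $\|\nabla\mathbf v\|_{L^2}\|\nabla T\|_{L^4}^2$. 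The Gagliardo--Nirenberg inequality together with the elliptic estimate $\|T\|_{H^2}\le C(\|\Delta T\|_{L^2}+\|T\|_{L^2})$ then lets me absorb a fraction of $\kappa\|\Delta T\|_{L^2}^2$ by Young's inequality and reach a differential inequality of the form $\frac{d}{dt}\|\nabla T\|_{L^2}^2+\kappa\|\Delta T\|_{L^2}^2\le a(t)\|\nabla T\|_{L^2}^2+\text{(lower order)}$. In two dimensions $a(t)\sim\|\nabla\mathbf v\|_{L^2}^2$ is integrable on $[0,t_f]$ precisely because $\mathbf v\in U_{\text{ad}}=L^2(0,t_f;V)$, so Gr\"{o}nwall's inequality closes the estimate and yields $T\in L^\infty(0,t_f;H^1(\Omega))\cap L^2(0,t_f;H^2(\Omega))$.

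I expect this last convection estimate to be the main obstacle, the three-dimensional case being the delicate point: there Gagliardo--Nirenberg gives only $\|\nabla T\|_{L^4}^2\lesssim\|\nabla T\|_{L^2}^{1/2}\|T\|_{H^2}^{3/2}$, and after Young's inequality the Gr\"{o}nwall factor degrades to $\|\nabla\mathbf v\|_{L^2}^4$, which is not controlled by the $U_{\text{ad}}$ norm alone; closing it requires exploiting the $L^\infty$ bound more carefully or strengthening the assumed regularity of $\mathbf v$. With the uniform bounds in hand, standard weak and weak-$*$ compactness together with the Aubin--Lions lemma let me pass to the limit $m\to\infty$ and identify the limit as a solution, the bilinear term passing to the limit thanks to the strong $L^2(0,t_f;H^1)$ convergence and \eqref{1EST_tri}. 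Finally, uniqueness is immediate: for two solutions $T_1,T_2$ the difference $w=T_1-T_2$ satisfies the same equation with the same $\mathbf v$ and zero initial data, and testing with $w$ gives $\frac12\frac{d}{dt}\|w\|_{L^2}^2+\kappa\|\nabla w\|_{L^2}^2=0$ because $\int_\Omega(\mathbf v\cdot\nabla w)w\,dx=0$, forcing $w\equiv0$.
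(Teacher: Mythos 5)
Your proposal is correct and follows essentially the same route as the paper's proof: the same $L^2$ energy identity, a Stampacchia-type truncation for the $L^\infty$ bound, testing with $-\Delta T$ with the identical cancellation $\int_\Omega v_i\,\partial_i\bigl(\tfrac12|\nabla T|^2\bigr)\,dx=0$, the bound $\|\nabla\mathbf{v}\|_{L^2}\|\nabla T\|_{L^4}^2$ absorbed via interpolation and Young's inequality, and Gr\"{o}nwall; the only structural difference is that you construct existence via a Galerkin scheme whereas the paper cites Barbu and Marinoschi for the weak solution and then derives the same a priori estimates. Your caveat about $d=3$ is in fact well founded and applies to the paper itself: its key step $\|\nabla T\|_{L^4}^2\le C\|\nabla T\|_{L^2}\|\Delta T\|_{L^2}$ is the two-dimensional Ladyzhenskaya interpolation, which fails for $d=3$ (where the Gr\"{o}nwall factor degrades to $\|\nabla\mathbf{v}\|_{L^2}^4$, not controlled by $U_{\text{ad}}=L^2(0,t_f;V)$), so the paper's argument as written shares precisely the gap you flag.
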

\begin{proof}
For $T_0\in L^\infty(\Omega)$ and $\bv\in L^2(0, t_f; H)$, the existence of a unique weak solution  $T$ to \eqref{sta_T}--\eqref{IC} has been shown in \cite[Theorem 1]{barbu2016optimal}. Moreover, 
\begin{align}
T\in C([0, t_f]; L^2(\Omega))\cap L^2(0, t_f; H^1(\Omega))\cap L^{\infty}(0, t_f; L^\infty(\Omega)).\label{EST_TL2}
\end{align}  
To see \eqref{EST_TL2},  taking  the inner product of \eqref{sta_T} with $T$ and integrating  by parts using \eqref{BC}, we have
\begin{align}
&\frac{1}{2}\frac{d\|T\|^2_{L^2}}{dt}+\kappa \|\nabla T\|^2_{L^2} =-\int_{\Omega} (\bv\cdot \nabla T)T\, dx
=-\frac{1}{2}\int_{\Omega}\bv\cdot  \nabla (T^2)\,dx
 = -\frac{1}{2}\left( \int_{\Gamma}(\bv\cdot  n) \, T^2\,dx -\int_{\Omega}(\nabla \cdot \bv) \,T^2\,dx\right)=0,
\label{1EST_TL2}
\end{align}
which  gives
   \begin{align}
\| T\|^2_{L^2}+2\kappa\int^{t}_0 \|\nabla T\|^2_{L^2}\,dt = \|T_0\|^2_{L^2}, \quad t\in [0, t_f].
\label{EST_TH1}
\end{align}
Furthermore, since $\frac{\partial T}{\partial t}\in L^2(0, t_f; (H^1(\Omega))')$, by  Aubin-Lions Lemma we have $T\in C([0, t_f]; L^2(\Omega))$.

Analogously, taking the inner product of  \eqref{sta_T} with $T^{N-1}$ with $N\ge 2$ and then letting $N\to \infty$ we get
   \begin{align}
\sup_{t\in[0, t_f]}\|T\|_{L^\infty}\leq  \|T_0\|_{L^\infty}. \label{EST_Tinfty}
\end{align}
This estimate  can be achieved  by using the  Stampacchia  theory.  
The reader is referred to \cite{barbu2016optimal, stampacchia1965probleme} for details.
To see \eqref{EST_TH3}, taking the inner product of \eqref{sta_T} with $-\Delta T$  and using Green's formula follow
\begin{align}
&\frac{1}{2}\frac{d\|\nabla T\|^2_{L^2}}{dt}+\kappa \|\Delta T\|^2_{L^2} =-\int_{\Omega} \nabla (\bv\cdot \nabla T)\cdot  \nabla T\, dx\label{0TH1}\\
&=-\int_{\Omega} \partial_j v_i\partial_i T\partial_j T\,dx- \frac{1}{2} \int_{\Omega}  v_i\partial_i (\partial _jT\partial _jT)\, dx\label{1TH1}\\
&=-\int_{\Omega} \partial_j v_i\partial_i T\partial_j T\,dx- \frac{1}{2}\left( \int_{\Gamma}  v_in_i (\partial _jT\partial _jT)\, dx
-\int_{\Omega} \partial_i  v_i(\partial _jT\partial _jT)\, dx\right)\nonumber\\
&=-\int_{\Omega} \partial_j v_i\partial_i T\partial_j T\,dx
\leq \|\nabla \bv\|_{L^2}\|\nabla T\|^2_{L^4}
\leq  C \|\nabla \bv\|_{L^2}\|\nabla   T\|_{L^2}\|\Delta  T\|_{L^2}\nonumber\\
&\leq  C \|\nabla \bv\|^2_{L^2}\|\nabla   T\|^2_{L^2}+\frac{\kappa}{2}\|\Delta  T\|^2_{L^2},\label{2TH1}
\end{align}
where from  \eqref{0TH1}  to \eqref{1TH1} we used  Einstein's summation convection, i.e.,  $ \nabla (\bv\cdot \nabla T)\cdot  \nabla T=\partial_j(v_i\partial_i T)\partial_j T$. 
From \eqref{2TH1} we get 
\begin{align}
&\frac{d\|\nabla T\|^2_{L^2}}{dt}+\kappa \|\Delta T\|^2_{L^2} 
\leq  C \|\nabla \bv\|^2_{L^2}\|\nabla   T\|^2_{L^2}, \label{00EST_TH1}
\end{align}
and hence, using Gr\"{o}nwall's inequality gives
\begin{align}
&\sup_{t\in[0, t_f]}\|\nabla   T\|_{L^2} \leq  e^{C \int^{t_f}_0 \|\nabla \bv\|^2_{L^2}\,dt}\|\nabla   T_0\|_{L^2}<\infty.\label{0EST_TH1}
\end{align}
Moreover,  from \eqref{00EST_TH1} we have
\begin{align*}
\kappa \int^{t_f}_0\|\Delta T\|^2_{L^2} \,dt
\leq  C \int^{t_f}_0\|\nabla \bv\|^2_{L^2}\|\nabla   T\|^2_{L^2}\,dt
\leq C\| \bv\|^2_{U_{\text{ad}}} \sup_{t\in[0, t_f]}\|\nabla   T\|_{L^2}<\infty,
\end{align*}
which completes the proof.
\end{proof}


\subsection{ Optimality Conditions}\label{opt}

Let $A=-\mathbb{P}\Delta$ be the Stokes operator   with
 $
 D(A)=V\cap H^2(\Omega),
$
where $\mathbb{P}\colon L^2(\Omega)\to H$ is the Leray projector (cf.~\cite[p.\,31]{constantin1988navier}).  Note that   $A$ is a  strictly positive and self-adjoint operator.  Moreover, 
define $D\colon L^2(\Omega)\to L^2(\Omega)$  such that $DT = T - \langle T\rangle$. Then the cost functional is equivalent   to  \begin{align}
 J(\bv)&=\frac{\alpha}{2}\|DT(t_f)\|^2_{L^2}+ \frac{\beta}{2}\int^{t_f}_0(D^*DT, T)\,dt +\frac{\gamma}{2}\int^{t_f}_0 (A\bv,\bv )\,dt.\label{cost}
  \end{align}
 As shown in \cite{he2021optimal},  it is easy to versify that $D=D^*$ and $D^2=D$, thus $\|D\|\leq 1$.

Now we derive the first order  necessary  optimality conditions  for problem \eqref{eq:p} by using a variational inequality  (cf.~\cite{lions1971optimal}), that is,  
if $\bv$ is an optimal solution of problem \eqref{eq:p}, then there holds
\begin{align} 
J'(\bv)\cdot (\mathbf{w}-\bv)\geq 0, \quad  \mathbf{w}\in U_{\text{ad}}. \label{var_ineq}
\end{align}
To establish the  G$\hat{a}$teaux differentiability of  $J(\bv)$, we first check the G$\hat{a}$teaux differentiability of $T$ with respect to $\bv$. 
Let  $z$ be the G$\hat{a}$teaux derivative of $T$ with respect to $\bv$ in the direction of $h\in U_{\text{ad}}$, i.e., $z = T'(\bv )\cdot h$. Then $z$ satisfies 
 \begin{equation}\label{z-equation}
	\begin{split}
		\frac{\partial z}{\partial t}&=\kappa \Delta z-\bv \cdot \nabla z-h \cdot \nabla T,\quad \frac{\partial z}{\partial n}\Big|_{\Gamma}=0, 
	\end{split}
\end{equation}
with $z(x, 0)=0$.
To show existence of  \eqref{z-equation}, we first establish an  {\it a prior} estimate of $z$. 
Taking the inner product of \eqref{z-equation} with $z$ and applying \eqref{2EST_tri}, we get
 \begin{align*}
\frac{1}{2}\frac{d\|z\|^2_{L^2}}{dt}+ \kappa \|\nabla z\|^2_{L^2}
&=\int_{\Omega} T (h\cdot \nabla z)\,d x
\leq \| T\|_{L^\infty}  \|h\|_{L^2} \|\nabla z\|_{L^2}\nonumber\\
&\quad\leq \frac{1}{2\kappa}\| T\|^2_{L^\infty}  \|h\|^2_{L^2} +\frac{\kappa}{2}\|\nabla z\|^2_{L^2},
\end{align*}
which follows
 \begin{align*}
&\frac{d\|z\|^2_{L^2}}{dt}+ \kappa \|\nabla z\|^2_{L^2}
\leq \frac{1}{\kappa}\| T\|^2_{L^\infty}  \|h\|^2_{L^2}.
\end{align*} 
With the help of Lemma \ref{lem0} and \eqref{EST_TH1} we have 
\begin{align}
& \|z\|^2_{L^2} + \kappa \int^{t}_0 \|\nabla z\|^2_{L^2} \,ds
\leq  \frac{1}{\kappa}  \int^{t}_0\| T\|^2_{L^\infty}  \|h\|^2_{L^2}\,ds
\leq   \frac{C}{\kappa} \|T_0\|^2_{L^\infty} \| h\|^2_{U_{\text{ad}}}, \quad t\in [0, t_f]. \label{EST_zL2}
\end{align}
Based on Lemma \ref{lem0}, \eqref{EST_TH1} and \eqref{EST_zL2}, it is clear that   $\bv \cdot \nabla z$ and  $h \cdot \nabla T\in L^1(0, t_f; H^{-1}(\Omega))$, thus $\frac{\partial z}{\partial t}\in L^1(0, t_f; H^{-1}(\Omega))$. According to \cite[Theorem 3.1]{Te1997},
there exists a unique  solution to \eqref{z-equation}  and $z\in L^{\infty}(0, t_f; L^2(\Omega))\cap L^2(0, t_f; H^1(\Omega))$.
Therefore, $T(\bv)$ is  G$\hat{a}$teaux differentiable for $\bv\in U_{\text{ad}}$, so is $J(\bv)$.

The following theorem establishes the 
first order optimality conditions for the solving the optimal control.

\begin{thm}\label{thm1}
If $\bv$ is the optimal solution to problem \eqref{eq:p} and $T$ is the corresponding solution to the state equations \eqref{sta_T}--\eqref{IC}. Then there exists an adjoint state  $q$ such that  the optimal triplet $(\bv, T, q)$ satisfies 
\begin{align}
&	\frac{\partial T}{\partial t}=\kappa \Delta T-\bv \cdot \nabla T, \quad \frac{\partial T}{\partial n}\Big|_{\Gamma} = 0, \quad T(0)=T_0,  \label{opt_T}\\
& -\frac{\partial q}{\partial t}=\kappa \Delta q+{\bv}  \cdot \nabla   q  + \beta D^*DT, \quad	\frac{\partial q}{\partial n}\Big|_{\Gamma} = 0, \quad q(t_f)=\alpha D^*DT(t_f),\label{opt_q}\\
&-\gamma\Delta {\bv}+\nabla p = q \nabla  T,\quad \nabla\cdot \bv = 0, \quad \bv|_{\Gamma} = 0,\label{opt_cond}
\end{align}
where pressure  $p\in L^2(\Omega)$   satisfies $\int_{\Omega}p\,dx=0$.
\end{thm}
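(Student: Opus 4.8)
The plan is to realize the system \eqref{opt_T}--\eqref{opt_cond} as the concrete form of the variational inequality \eqref{var_ineq} through a Lagrange-multiplier (adjoint-state) argument. The state equation \eqref{opt_T} is just \eqref{sta_T}--\eqref{IC} at the optimal $\bv$, so the actual work is to produce the adjoint equation \eqref{opt_q} and the optimality condition \eqref{opt_cond}. First I would differentiate the representation \eqref{cost}. Since $T(\bv)$ is G\^ateaux differentiable with derivative $z=T'(\bv)\cdot h$ solving \eqref{z-equation}, and since $D=D^*$, $D^2=D$ make $D^*D$ self-adjoint while $A$ is self-adjoint, I obtain
\begin{align*}
J'(\bv)\cdot h=\alpha\,(D^*DT(t_f),z(t_f))+\beta\int_0^{t_f}(D^*DT,z)\,dt+\gamma\int_0^{t_f}(A\bv,h)\,dt.
\end{align*}
The first two terms depend on $h$ only implicitly through $z$, and the purpose of the adjoint is to convert them into an explicit linear functional of $h$.

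To do so I introduce $q$ as the solution of the backward problem \eqref{opt_q}. Its well-posedness follows from the time reversal $\tau=t_f-t$, which turns \eqref{opt_q} into a forward convection-diffusion equation with Neumann data, source $\beta D^*DT\in L^2(0,t_f;L^2(\Omega))$ (finite since $D^*DT=DT$ and $T\in L^\infty(0,t_f;L^\infty(\Omega))$ by Lemma \ref{lem1}) and initial datum $\alpha D^*DT(t_f)\in L^2(\Omega)$; existence, uniqueness, and $q\in L^\infty(0,t_f;L^2(\Omega))\cap L^2(0,t_f;H^1(\Omega))$ then follow exactly as in Lemma \ref{lem1}. The central step is the duality pairing: I test \eqref{z-equation} with $q$, integrate over $\Omega\times(0,t_f)$, and integrate by parts. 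In time this uses $z(0)=0$ and transfers $\partial_t$ onto $q$; in space the diffusion term is symmetrized by Green's formula (boundary terms cancel since both $z$ and $q$ satisfy homogeneous Neumann conditions), and the transport term is handled by the skew-symmetry identity \eqref{2EST_tri}, $\int_\Omega(\bv\cdot\nabla z)q\,dx=-\int_\Omega(\bv\cdot\nabla q)z\,dx$. Collecting the $z$-terms and invoking \eqref{opt_q} with its terminal condition makes every explicit occurrence of $z$ collapse, yielding
\begin{align*}
\alpha\,(D^*DT(t_f),z(t_f))+\beta\int_0^{t_f}(D^*DT,z)\,dt=-\int_0^{t_f}(h\cdot\nabla T,q)\,dt,
\end{align*}
so that $J'(\bv)\cdot h=\gamma\int_0^{t_f}(A\bv,h)\,dt-\int_0^{t_f}(h\cdot\nabla T,q)\,dt$.

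Finally I would use that $U_{\text{ad}}=L^2(0,t_f;V)$ in \eqref{U} is a \emph{linear} space: testing \eqref{var_ineq} with $\bw=\bv\pm h$ upgrades the inequality to the identity $J'(\bv)\cdot h=0$ for all $h\in U_{\text{ad}}$. Writing the transport term pointwise as $(h\cdot\nabla T)q=h\cdot(q\nabla T)$, this becomes $\gamma(A\bv,h)=(q\nabla T,h)$ for all $h\in V$ and a.e.\ $t$. Interpreting $(A\bv,h)$ as the Dirichlet form $(\nabla\bv,\nabla h)$, the distribution $-\gamma\Delta\bv-q\nabla T$ annihilates every divergence-free $h\in H^1_0(\Omega)$; by the de Rham / Leray decomposition its $V$-orthogonal complement consists precisely of gradients, hence $-\gamma\Delta\bv-q\nabla T=-\nabla p$ for some $p\in L^2(\Omega)$, normalized by $\int_\Omega p\,dx=0$. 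This is exactly \eqref{opt_cond}, and elliptic regularity for \eqref{opt_cond} then recovers the needed smoothness of $\bv$.

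The step I expect to be the main obstacle is the rigorous justification of the space-time integration by parts in the duality pairing. The a priori bound \eqref{EST_zL2} only gives $z\in L^\infty(0,t_f;L^2(\Omega))\cap L^2(0,t_f;H^1(\Omega))$ with $\partial_t z\in L^1(0,t_f;H^{-1}(\Omega))$, which is borderline for the integration-by-parts-in-time formula and for assigning the terminal pairing $(z(t_f),q(t_f))$ a pointwise meaning. I anticipate having to upgrade the regularity of $z$ by repeating the Lemma \ref{lem1} energy argument on \eqref{z-equation}, using the improved bounds $\nabla T\in L^\infty(0,t_f;L^2(\Omega))$ and $T\in L^2(0,t_f;H^2(\Omega))$ to control the source $h\cdot\nabla T$ in $L^2(0,t_f;H^{-1}(\Omega))$, so that $\partial_t z\in L^2(0,t_f;H^{-1}(\Omega))$ and the Lions--Magenes formula applies; alternatively one may argue by density on smoother approximating controls, as in \cite{barbu2016optimal}, and pass to the limit in the identity for $J'(\bv)\cdot h$.
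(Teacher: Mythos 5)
Your proposal is correct and follows essentially the same route as the paper's own proof: differentiate $J$ using $z=T'(\bv)\cdot h$, define $q$ by the backward adjoint equation \eqref{opt_q}, pair \eqref{z-equation} with $q$ using Green's formula and the skew-symmetry identity \eqref{2EST_tri} to collapse the $z$-terms into $-\int_0^{t_f}(q\nabla T,h)\,dt$, and then use the variational inequality on the linear space $U_{\text{ad}}$ together with the Leray/de Rham decomposition to obtain \eqref{opt_cond}. Your additional care about the well-posedness of the backward problem and the borderline regularity ($\partial_t z\in L^1(0,t_f;H^{-1}(\Omega))$) needed to justify the space-time integration by parts goes beyond the paper's formal computation, but it is a refinement of, not a departure from, the same argument.
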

\begin{proof}
The first order necessary optimality system for $\bv\in L^2(0, t_f; H)$  has been derived  in  \cite[Theorem]{barbu2016optimal}  using an approximate control approach.
However,   since  $J$ is  G$\hat{a}$teaux differentiable for $\bv\in U_{\text{ad}}$ in our current work as shown in Theorem \ref{thm1},  we  are able to  directly  apply the variational inequality \eqref{var_ineq} to establish this result. 
 
First multiply \eqref{z-equation} by $q$ and integrate  over $\Omega\times (0, t_f)$. Then applying integration by parts and Green's formula together with  \eqref{2EST_tri},
we have 
\begin{align*}
(z(t_f), q(t_f))-\int^{t_f}_0(z, \frac{\partial q}{\partial t})\,dt&=\int^{t_f}_0(z, \kappa \Delta q)\,dt
+\int^{t_f}_0(z, {\bv}  \cdot \nabla   q)\,dt.
	\end{align*}
	On the other hand, 
	\begin{align}
	J'(\bv) \cdot h =&\alpha(D^*DT(t_f), z(t_f))+\beta\int^{t_f}_0(D^*DT, z)\,dt + \gamma\int^{t_f}_0 (A \bv, h)\,dt. \label{J_1st}
\end{align}
Now let  the adjoint state $q$	satisfy   \eqref{opt_q}.
The  G\^{a}teaux  derivative  of $J$ becomes 
\begin{align}
		J'(\bv) \cdot h 
		=&( q(t_f), z(t_f))- \int^{t_f}_0(\frac{\partial q}{\partial t}+\kappa \Delta q  + \bv \cdot \nabla q, z)\,dt + \gamma\int^{t_f}_0 (A\bv, h)\,dt\nonumber\\
		=& \int^{t_f}_0 (q,   \frac{\partial z}{\partial t} - \kappa \Delta  z+ \bv\cdot  \nabla z) \,dt+  \gamma\int^{t_f}_0(A\bv, h)\,dt\nonumber\\
		=& \int^{t_f}_0 (q,  -h \cdot \nabla T) \,dt+  \gamma \int^{t_f}_0(A\bv, h)\,dt\nonumber\\
		=& \int^{t_f}_0 (-q\nabla T, h)\,dt +  \gamma\int^{t_f}_0 (A\bv, h)\,dt. \label{EST_J}
\end{align}
Therefore, if $\bv^{\text{opt}}$ is the optimal solution, then $J'(\bv^{\text{opt}})\cdot h\geq 0$ for any $h\in U_{\text{ad}}$. This implies 
\begin{align}
 \gamma A\bv^{\text{opt}}-\mathbb{P}(q\nabla T)=0\quad \text{or}\quad
-  \gamma\Delta  \bv^{\text{opt}}+\nabla p-q\nabla T=0
  \label{opt_v}
\end{align}
for some $p\in L^2(\Omega)$ with  $\int_{\Omega}p\,dx=0$. 

Moreover, applying the similar approaches as in Lemma \ref{lem1} and \eqref{EST_Tinfty} and noting that $\|D\|\leq 1$, we have
	\begin{align}
&	\| q\|^2_{L^2}+2\kappa\int^{t}_0 \|\nabla q\|^2_{L^2} 
\leq \beta \int^{t_f}_0\|T\|^2_{L^2}\, dt+ \alpha\|T(t_f)\|^2_{L^2}
\leq C(T_0, t_f), \quad t\in [0, t_f],  \label{EST_qL2}\\
&\text{and} \quad \sup_{t\in [0, t_f]} \|q\|_{L^\infty}\leq \beta \int^{t_f}_0\|T\|_{L^\infty}\,dt+\alpha \|T(t_f)\|_{L^\infty}
\leq C(T_0, t_f),  \label{EST_qinfty}
\end{align}	
for some constant $C(T_0, t_f)$ depending on $T_0$ and $t_f$. 
 This completes the proof.
\end{proof}

Note that the uniqueness of the solution to the optimality system \eqref{opt_T}--\eqref{opt_cond} can be  obtained  under certain conditions on $T_0, t_f$ and $\gamma$. The  proof follows the same as in  \cite[Theorem 6]{barbu2016optimal}.  Moreover,  we have   the following regularity  results  for any optimal triplet 
$(\bv, T,q)$ satisfying \eqref{opt_T}--\eqref{opt_cond}. The proof is presented in Appendix \ref{app}.

\begin{corollary}\label{cor1}
If  $T_0\in H^2(\Omega)$ and  $(\bv, T, q)$ satisfies  the first order necessary optimality system  \eqref{opt_T}--\eqref{opt_cond}, then
\begin{align}
\bv\in L^\infty(0, t_f; V\cap H^2(\Omega)), \quad T\in L^{\infty}(0, t_f; H^2(\Omega))\cap L^2(0, t_f; H^3(\Omega)).
\end{align}
\end{corollary}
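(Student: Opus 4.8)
\emph{Proof proposal.} The plan is to run a one-pass bootstrap through the optimality system \eqref{opt_T}--\eqref{opt_cond}, exploiting the fact that the stationary Stokes equation \eqref{opt_cond} gains two spatial derivatives over its forcing, and then feeding the resulting smoothness of $\bv$ into a top-order parabolic estimate for $T$. The starting point is the regularity already at hand: Lemma \ref{lem1} gives $T\in L^\infty(0,t_f;L^\infty(\Omega)\cap H^1(\Omega))\cap L^2(0,t_f;H^2(\Omega))$, while \eqref{EST_qL2}--\eqref{EST_qinfty} give $q\in L^\infty(0,t_f;L^\infty(\Omega))$. No circularity will arise, since the bound on $\bv$ rests only on this a priori information about $q$ and $T$, and the subsequent bound on $T$ rests only on the improved bound for $\bv$.

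First I would bound $\bv$. Since $q\in L^\infty(0,t_f;L^\infty(\Omega))$ and $\nabla T\in L^\infty(0,t_f;L^2(\Omega))$, the forcing of \eqref{opt_cond} satisfies
\[
\|q(t)\nabla T(t)\|_{L^2}\leq \|q(t)\|_{L^\infty}\|\nabla T(t)\|_{L^2}\leq C \qquad \text{for a.e. } t\in[0,t_f],
\]
so that $q\nabla T\in L^\infty(0,t_f;L^2(\Omega))$. Applying the $H^2$ regularity theory for the stationary Stokes system \eqref{opt_cond} pointwise in $t$ yields $\|\bv(t)\|_{H^2}\leq \tfrac{C}{\gamma}\|q(t)\nabla T(t)\|_{L^2}$, and hence $\bv\in L^\infty(0,t_f;V\cap H^2(\Omega))$, which is the first assertion. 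By the Sobolev embeddings valid for $d\le 3$, this also provides $\bv\in L^\infty(0,t_f;L^\infty(\Omega))$ and $\nabla\bv\in L^\infty(0,t_f;L^4(\Omega))$, which is all the smoothness of $\bv$ that the next step requires.

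With $\bv$ now controlled in $L^\infty(0,t_f;H^2)$, I would upgrade $T$ by a top-order energy estimate on \eqref{opt_T}. Formally testing the state equation with $\Delta^2 T$, equivalently working with the self-adjoint Neumann Laplacian $A=-\Delta$ and its fractional powers so that the dissipation appears as $\kappa\|A^{3/2}T\|_{L^2}^2$, and then invoking the product (algebra) estimate
\[
\|\bv\cdot\nabla T\|_{H^1}\leq C\|\bv\|_{H^2}\|T\|_{H^2},
\]
which holds for $d\le 3$ because $H^2(\Omega)\hookrightarrow L^\infty(\Omega)$ and $H^1(\Omega)\hookrightarrow L^4(\Omega)$, one arrives after Young's inequality at
\[
\frac{d}{dt}\|\Delta T\|_{L^2}^2+\kappa\|T\|_{H^3}^2\leq C\|\bv\|_{H^2}^2\|T\|_{H^2}^2 .
\]
Combining this with the elliptic regularity bound $\|T\|_{H^2}^2\leq C(\|\Delta T\|_{L^2}^2+\|T\|_{L^2}^2)$ and applying Gr\"onwall's inequality with the coefficient $\|\bv\|_{H^2}^2\in L^\infty(0,t_f)\subset L^1(0,t_f)$, together with $\Delta T(0)=\Delta T_0\in L^2(\Omega)$ guaranteed by $T_0\in H^2(\Omega)$, yields $T\in L^\infty(0,t_f;H^2(\Omega))\cap L^2(0,t_f;H^3(\Omega))$.

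I expect the main obstacle to be making the top-order estimate rigorous rather than formal: testing with $\Delta^2 T$ produces boundary contributions that must be reconciled with the Neumann condition $\tfrac{\partial T}{\partial n}|_\Gamma=0$, and $\Delta^2 T$ is only meaningful once sufficient a priori regularity is known. I would therefore carry out the estimate within a Galerkin scheme built on the eigenbasis of the Neumann Laplacian, where self-adjointness and the boundary condition are built in and the integration-by-parts identities hold exactly, and then pass to the limit using the uniform bounds above. The remaining steps are routine bookkeeping: verifying the product estimate in the relevant dimension $d=2,3$ and checking that the stated regularity requires no boundary compatibility condition on $T_0$ beyond $T_0\in H^2(\Omega)$.
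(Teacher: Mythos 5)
Your proposal is correct and follows essentially the same route as the paper's proof: first bound $\bv$ in $L^\infty(0,t_f;V\cap H^2(\Omega))$ via Stokes regularity for \eqref{opt_cond} using $q\in L^\infty(0,t_f;L^\infty(\Omega))$ from \eqref{EST_qinfty} and $\nabla T\in L^\infty(0,t_f;L^2(\Omega))$ from Lemma \ref{lem1}, then test \eqref{opt_T} with $(-\Delta)^2T$ and close the top-order estimate with the product bound $\|\nabla(\bv\cdot\nabla T)\|_{L^2}\leq C\|\bv\|_{H^2}\|T\|_{H^2}$ (the paper uses Agmon's inequality here) and Gr\"onwall. Your added remark about justifying the formal $\Delta^2T$ test via a Galerkin scheme on the Neumann eigenbasis is a legitimate refinement that the paper silently skips, but it does not change the substance of the argument.
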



With the help of these properties,  we can further  address   the  second order necessary optimality conditions for characterizing the optimal solutions.

\begin{thm}\label{SONC}
Let  $\bv$ be an  optimal solution to problem \eqref{eq:p} and  the  triplet $(\bv, T, q)$ satisfy   the first order necessary optimality system  \eqref{opt_T}--\eqref{opt_cond}.  If $\gamma>0$ is sufficiently large,  then  there exists some constant $c_0>0$ such that 
\begin{align}
 J''(\bv)\cdot (h, h) \geq c_0\|h\|^2_{U_{\text{ad}}},   \label{opt_2rd}
\end{align}
for $ h\in U_{\text{ad}}$.
\end{thm}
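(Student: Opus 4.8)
The plan is to obtain an explicit formula for the second G$\hat{a}$teaux derivative $J''(\bv)\cdot(h,h)$, isolate the manifestly nonnegative terms together with the regularization term $\gamma\|h\|^2_{U_{\text{ad}}}$, and then absorb the remaining indefinite contribution into the regularization once $\gamma$ is large. First I would differentiate the sensitivity equation \eqref{z-equation} once more in the direction $h$. Writing $z=T'(\bv)\cdot h$ and $w=T''(\bv)\cdot(h,h)$, the second-order sensitivity $w$ solves the linear problem
\begin{equation*}
\frac{\partial w}{\partial t}=\kappa\Delta w-\bv\cdot\nabla w-2\,h\cdot\nabla z,\qquad \frac{\partial w}{\partial n}\Big|_{\Gamma}=0,\qquad w(0)=0,
\end{equation*}
where the factor $2$ arises from differentiating both the transport coefficient $\bv$ and the advected field $T$ in the term $\bv\cdot\nabla z$ produced at first order. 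Differentiating \eqref{J_1st} once more and using $D=D^*$, $D^2=D$ together with $(A h,h)=\|\nabla h\|^2_{L^2}$ for $h\in V$, I would arrive at
\begin{equation*}
J''(\bv)\cdot(h,h)=\alpha\|Dz(t_f)\|^2_{L^2}+\beta\int_0^{t_f}\|Dz\|^2_{L^2}\,dt+\gamma\|h\|^2_{U_{\text{ad}}}+\alpha(DT(t_f),w(t_f))+\beta\int_0^{t_f}(DT,w)\,dt .
\end{equation*}

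The central step is to eliminate the second-order state $w$, whose direct estimation would be delicate, by testing its equation against the adjoint $q$ from \eqref{opt_q}. Integrating by parts in time and in space, using the Neumann conditions and the skew-symmetry \eqref{2EST_tri} of the transport term, the interior and terminal contributions recombine precisely through the structure of the adjoint equation, so that the two $w$-terms collapse into a single cross term,
\begin{equation*}
\alpha(DT(t_f),w(t_f))+\beta\int_0^{t_f}(DT,w)\,dt=-2\int_0^{t_f}\!\!\int_\Omega (h\cdot\nabla z)\,q\,dx\,dt .
\end{equation*}
Substituting this back expresses $J''(\bv)\cdot(h,h)$ as the sum of three nonnegative/coercive terms minus twice this cross term, so the whole problem reduces to bounding $\bigl|\int_0^{t_f}\!\int_\Omega (h\cdot\nabla z)q\,dx\,dt\bigr|$ by a constant multiple of $\|h\|^2_{U_{\text{ad}}}$.

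For this estimate I would invoke the $L^\infty$ bound \eqref{EST_qinfty} on $q$ (or, equivalently, the trilinear estimate \eqref{1EST_tri} of Lemma \ref{lem0}) to get $\bigl|\int_\Omega (h\cdot\nabla z)q\,dx\bigr|\le \|q\|_{L^\infty}\|h\|_{L^2}\|\nabla z\|_{L^2}$, then apply Cauchy--Schwarz in time together with the a priori sensitivity bound \eqref{EST_zL2}, which controls $\int_0^{t_f}\|\nabla z\|^2_{L^2}\,dt$ by $C\kappa^{-2}\|T_0\|^2_{L^\infty}\|h\|^2_{U_{\text{ad}}}$. This produces a bound of the form $C'\|h\|^2_{U_{\text{ad}}}$, and the crucial point --- which I would emphasize --- is that $C'$ depends only on $\kappa,\,t_f$ and $\|T_0\|_{L^\infty}$ and \emph{not} on $\gamma$, because both the state bound \eqref{EST_Tinfty} and the adjoint bound \eqref{EST_qinfty} are independent of the optimal control $\bv$. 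Consequently $J''(\bv)\cdot(h,h)\ge(\gamma-C')\|h\|^2_{U_{\text{ad}}}$, and choosing $\gamma>C'$ yields \eqref{opt_2rd} with $c_0=\gamma-C'$. The main obstacle is the adjoint identity that removes $w$: one must check that every boundary and transport term generated by the double integration by parts cancels exactly against \eqref{opt_q}, and separately certify that the cross-term constant carries no hidden $\gamma$-dependence, since otherwise the absorption argument would collapse.
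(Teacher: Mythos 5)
Your overall architecture coincides with the paper's own proof: the same second-order sensitivity equation (your $w$ is the paper's $Z$ from \eqref{Z-equation} with $h_1=h_2=h$, with the factor $2$ correctly accounted for), the same expansion of the second derivative (the paper's \eqref{J_2diff}), the same elimination of the second-order state by pairing its equation with the adjoint $q$ of \eqref{opt_q} --- your identity is exactly the paper's after rewriting $-2\int_0^{t_f}\!\int_\Omega (h\cdot\nabla z)\,q\,dx\,dt = 2\int_0^{t_f}\!\int_\Omega z\,(h\cdot\nabla q)\,dx\,dt$ via \eqref{2EST_tri} --- and the same absorption of the cross term into $\gamma\|h\|^2_{U_{\text{ad}}}$. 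Your cross-term bound differs in detail and is valid: you use $\|q\|_{L^\infty}\|h\|_{L^2}\|\nabla z\|_{L^2}$ together with \eqref{EST_qinfty} and \eqref{EST_zL2}, whereas the paper uses the trilinear bound \eqref{1EST_tri} together with $\sup_t\|\nabla z\|_{L^2}\le C(T_0,t_f)\|h\|_{U_{\text{ad}}}$ from \eqref{EST_zH1}. Your variant is in fact cleaner for certifying $\gamma$-independence of the constant, since \eqref{EST_Tinfty}, \eqref{EST_qinfty} and \eqref{EST_zL2} never invoke the optimality condition, while the paper's route passes through bounds on the optimal $\bv$ itself.

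The genuine gap is that you never justify that $J$ is twice G\^{a}teaux differentiable, i.e., that $w=T''(\bv)\cdot(h,h)$ exists and has enough regularity to carry out the space-time integrations by parts against $q$. This is not a formality here: the paper explicitly remarks after the theorem that the regularity of $U_{\text{ad}}$ is \emph{not} sufficient for twice differentiability in general, and roughly half of its proof is devoted to exactly this point. Concretely, the natural energy estimate for the $w$-equation bounds the forcing term by $\|\nabla h\|_{L^2}\|\nabla z\|_{L^2}\|\nabla w\|_{L^2}$, and after Young's inequality one must integrate $\|\nabla h\|^2_{L^2}\|\nabla z\|^2_{L^2}$ in time; with only $z\in L^2(0,t_f;H^1(\Omega))$ from \eqref{EST_zL2}, this product of two merely $L^1$-in-time functions need not be integrable, so the naive argument fails. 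The paper resolves this by first proving the stronger bound \eqref{EST_zH1}, $\sup_{t}\|\nabla z\|_{L^2}\le C(T_0,t_f)\|h\|_{U_{\text{ad}}}$, which in turn requires $T_0\in H^2(\Omega)$ and the regularity of the optimal triplet from Corollary \ref{cor1} (in particular $\Delta T\in L^\infty(0,t_f;L^2(\Omega))$ and $\bv\in L^\infty(0,t_f;H^2(\Omega))$); this is precisely why the theorem is stated at a triplet satisfying \eqref{opt_T}--\eqref{opt_cond}, and it yields the estimate \eqref{EST_ZH1} that makes $w$ well defined. Without this step, your displayed equation for $w$, the formula for $J''$, and the adjoint identity are formal manipulations on an object whose existence has not been established; every identity and estimate you do write down is correct and would survive once this well-posedness is supplied.
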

The  proof of Theorem \ref{SONC} is given  in   Appendix \ref{app}.  However,  the regularity of $U_{\text{ad}}$ is not sufficient for $J$ to have the twice G$\hat{a}$teaux differentiability in general.

\section{Feedback Control Law Based on Instantaneous Control Design }\label{feedback}

With the understanding  of the optimal control  design in our disposal, we are in the position to construct a feedback control law based on the method of instantaneous control and compare  the DTO approach with the OTD approach.  The former, as mentioned earlier,  is to first discretize the uncontrolled state equations in time and conduct  the optimization procedure over discrete time steps, and then progress  recursively in time (cf.~\cite{hinze2000optimal, hinze2002analysis}). In contrast, the latter  is to directly discretize the optimality system \eqref{opt_T}--\eqref{opt_cond} on one step time sub-interval, and then carry the information for  the next time sub-interval, where the state and the adjoint equations will be formulated forward and backward in time, respectively, but just for one step.  Finally,  we observe that under appropriate time discretization schemes, these two approaches  lead to the same nonlinear continuous feedback controller. Its effectiveness  will be compared with the optimal control numerically  in section \ref{Num}.

\subsection{Discretize-then-Optimize Approach}
Consider a uniform partition of $[0, t_f]$ and let $\tau=\frac{t_f}{n+1}$ for $n\in \mathbb{N}$ and $t_i=i\tau, i=0,1, \dots,n$.
Using the  semi-implict Euler's method for discretizing the state equations \eqref{sta_T} in time gives, for $i=0, 1,\dots, n$, 
  \begin{align} 
      & \frac{T^{i+1}-T^i}{\tau}=\kappa \Delta T^{i+1}-\bv^{i+1}\cdot \nabla T^i, 
      \quad \text{that is} \quad (I-\kappa \tau\Delta)T^{i+1}=T^i- \tau\bv^{i+1}\cdot \nabla T^i,  \label{T_dis}
   \end{align}
   where $T^0=T_0$.
Let  $\alpha=0$, $\beta=1$, $U^{i}_{\text{ad}}=V$, and $\langle T^{i+1}\rangle=\frac{1}{\tau }\int^{t_{i+1}}_{t_i}\langle T\rangle\,ds$. Given $T^i$ at $t_i$, we solve for the control $\bv^{i+1}$ at $t_{i+1}$ by minimizing the following instantaneous version of the cost functional $J$ in ($P$):
   \begin{align*}
J^{i+1}(\bv^{i+1})&=\frac{1}{2} \int_{\Omega} |T^{i+1}-\langle T^{i+1}\rangle|^{2}\,dx
+\frac{\gamma}{2} \int_{\Omega} |A^{1/2} \bv^{i+1}|^{2}\, dx\nonumber\\
&=\frac{1}{2} (D^*DT^{i+1}, T^{i+1})+\frac{\gamma}{2}(A\bv^{i+1}, \bv^{i+1})\hspace{1in}\tag{$P^{i+1}$} \label{eq:pi}  
\end{align*}
subject to \eqref{T_dis}.
Again using a similar variational inequality as shown in proof of Theorem \ref{thm1}, we have
   \begin{align*}
(J^{i+1})'(\bv^{i+1})\cdot h^{i+1}&=(D^*DT^{i+1}, z^{i+1})+\gamma(A\bv^{i+1}, h^{i+1})
\end{align*}
for $h^{i+1}\in U^{i+1}_{\text{ad}}$, where $z^{i+1}=(T^{i+1})'(\bv^{i+1})\cdot h^{i+1}$ satisfies 
  \begin{align} 
       (I-\kappa \tau\Delta)z^{i+1}=- \tau h^{i+1}\cdot \nabla T^i,  \quad i=0, 1,\dots, n. \label{z_dis}
   \end{align} 
Define the adjoint state $q^{i+1}$ such that 
   \begin{align}
       & (I- \kappa\tau  \Delta)q^{i+1}=D^*DT^{i+1}. \label{q_dis}
             \end{align}
      Then  with the help of \eqref{z_dis}--\eqref{q_dis}, we get
        \begin{align*}
(J^{i+1})'(\bv^{i+1})\cdot h^{i+1}&=( (I- \kappa\tau  \Delta)q^{i+1}, z^{i+1})+\gamma(A\bv^{i+1}, h^{i+1})\\
&= -(q^{i+1},\tau h^{i+1}\cdot \nabla T^i)+\gamma(A\bv^{i+1}, h^{i+1}) \\
&= -(\tau q^{i+1} \nabla T^i,h^{i+1})+\gamma(A\bv^{i+1}, h^{i+1}),
\end{align*}
which implies that if $\bv^{i+1}$ is an optimal solution to problem $\eqref{eq:pi}$, then it satisfies a Stokes equation
        \begin{align}
    \gamma A\bv^{i+1}-\tau \mathbb{P}(q^{i+1} \nabla T^{i})=0\quad 
\text{or}\quad
-  \gamma\Delta  \bv^{i+1}+\nabla p^{i+1}-\tau q^{i+1}\nabla T^{i}=0, \quad i=0, 1,\dots,n,
 \label{opt_vi}
\end{align}
for some $p^{i+1}\in L^2(\Omega)$ with  $\int_{\Omega}p^{i+1}\,dx=0$.

Let 
$E_{\tau}= I-\kappa \tau\Delta$ with domain 
$D(E_{\tau})=\{ T\in H^2(\Omega)\colon   \frac{\partial T}{\partial n}|_{\Gamma}=0\}$.  Then $E_{\tau}$ is a strictly positive elliptic operator for $\kappa \tau>0$. 
In summary, the optimality system for  problem $\eqref{eq:pi}$ is governed by, for $i=0,1, \dots n$, 
\begin{eqnarray}\label{opt_syst_dis}
\begin{cases}
E_{\tau} T^{i+1}=T^i- \tau\bv^{i+1}\cdot \nabla T^i, \quad \frac{\partial T^{i+1}}{\partial n}|_{\Gamma}=0,\\
	E_{\tau} q^{i+1}=D^*DT^{i+1},  \quad \frac{\partial q^{i+1}}{\partial n}|_{\Gamma}=0,\\
      -\gamma\Delta \bv^{i+1}+\nabla p^{i+1} = \tau q^{i+1} \nabla  T^{i},\quad \nabla\cdot \bv^{i+1} = 0, \quad \bv^{i+1}|_{\Gamma} = 0.
\end{cases}
\end{eqnarray}
 The optimality system \eqref{opt_syst_dis}  admits a unique solution due to the quadratic cost functional and the uniqueness of solution to the  discretized state equation \eqref{T_dis}.

 To construct a feasible  feedback control law based on the nonlinear optimality system \eqref{opt_syst_dis}, we suggest  first solving  $q^{i+1}=E^{-1}_{\tau}D^*D T^{i+1}$ from the second equation, and then obtain an implicit  approximation to  $\bv^{i+1}$ from the third equation
\begin{align}
		-\gamma\Delta \bv^{i+1}+\nabla p^{i+1} = \tau (E^{-1}_{\tau}D^*D  T^{i+1}) \nabla  T^{i},\quad \nabla\cdot \bv^{i+1} = 0, \quad \bv^{i+1}|_{\Gamma} = 0,\label{v_implict}
\end{align}
or equivalently 
$\bv^{i+1}=\frac{\tau}{\gamma} A^{-1}\mathbb{P} (E^{-1}_{\tau}D^*D  T^{i+1} \nabla  T^{i})$.
Upon plugging this implicit instantaneous control $\bv^{i+1}$ into the first equation, we get an implicit time marching scheme from $T^{i}$ to $T^{i+1}$:
\begin{align*}
(I-\kappa \tau\Delta)T^{i+1}=E_{\tau} T^{i+1}&=
      T^i-  \tau\frac{\tau}{\gamma} [A^{-1}\mathbb{P} ((E^{-1}_{\tau}D^*D T^{i+1})\nabla T^{i})]\cdot \nabla T^i  \quad i=0,1,\dots, n.
            \end{align*}
The above nonlinear scheme is not suitable for computation, but it turns out to be  a semi-implicit time  discretization (with the time step size $\tau$)  of 
 a closed-loop dynamical system (retain $\tau$ as a parameter )
            \begin{align}
                  \frac{\partial T}{\partial  t} 
      &=\kappa\Delta T-   \underbrace{\frac{\tau}{\gamma} [A^{-1}\mathbb{P} ((E^{-1}_{\tau}D^*DT)\nabla T)}_{\bv}]\cdot \nabla T, \quad T(0)=T_0,
      \label{closed_loop}
            \end{align}   
where the continuous control $\bv$ is given by the nonlinear feedback law :
\begin{align}
&\bv= \frac{\tau}{\gamma} A^{-1}\mathbb{P} ((E^{-1}_{\tau} D^*DT)\nabla T)
\quad \text{or}\quad
-  \gamma \Delta  \bv+\nabla p=\tau( (E^{-1}_{\tau} D^*DT)\nabla T).  \label{feedback_v}
\end{align}
Although no theoretical guarantee in optimality,  we examine the  performance of the feedback law in minimizing the objective functional $J$ numerically, which can be computed  much more efficiently than the optimal control. 
  \begin{remark}
  Note that if solving velocity explicitly in \eqref{feedback_v} using  $T^{i+1}=E^{-1}_{\tau}T^i$ and $\bv^i_0=0$ for each iteration, we would have
  \begin{align}
&\bv= \frac{\tau}{\gamma} A^{-1}\mathbb{P} ((E^{-1}_{\tau}(D^*DE^{-1}_{\tau} T))\nabla T)
\quad \text{or}\quad
-  \gamma \Delta  \bv+\nabla p=\tau( (E^{-1}_{\tau} D^*DE^{-1}_{\tau} T)\nabla T), \label{2feedback_v}
\end{align}
which involves a more regularized   $T$ compared to \eqref{feedback_v}. 
Also,   the gradient decent method is not used  for solving $\bv^{i+1}$ as in \cite{hinze2000optimal,hinze2002analysis}, yet    the optimality condition \eqref{opt_vi} is directly called. This way will keep the control weight $\gamma$ in the closed-loop system. 
By properly choosing this parameter and step size $\tau$, one can establish the well-posedness and stability of the closed-loop system (see Theorem \ref{stability}).
  	Moreover, once  the continuous  closed-loop dynamical system is derived, $\tau$ only plays a role as a parameter associated with the feedback control law.  It does not indicate  the time step size in the numerical simulation of the nonlinear closed-loop system.  
\end{remark}  
  
\subsection{Optimize-then-Discretize Approach}

Alternatively, motivated by the idea of instantaneous control, we consider a direct application  of the optimality system \eqref{opt_T}--\eqref{opt_cond}  derived  in Theorem \ref{thm1} to formulate the feedback law.   To this end, letting  $\tau=t_f/(n+1)$ and $t_i=i\tau, i=0,1,\cdots,n+1$, we divide the global time interval $[0,t_f]$ into uniformly spaced sub-intervals $I_i=[t_i,t_{i+1}]$, and then solve  the continuous optimal control problem (P) restricted to each interval $I_i$ sequentially, where for $i\ge 1$ the initial condition of $T$ on $I_i$ is given by the solution from the previous sub-interval $I_{i-1}$. 
Let $T|_{I_i},q|_{I_i},\bv|_{I_i}$ denotes the desired continuous  state, adjoint state, and optimal control on each sub-interval $I_i$, respectively.
According to  Theorem \ref{thm1}, the localized optimality system defined on $I_i$ reads (only consider the case $\alpha=0,\beta=1$)
\begin{align}
	&	\frac{\partial T|_{I_i}}{\partial t}=\kappa \Delta T|_{I_i}-\bv|_{I_i} \cdot \nabla T|_{I_i}, \quad \frac{\partial T|_{I_i}}{\partial n}\Big|_{\Gamma} = 0, \quad T|_{I_i}(\cdot,t_i)=T|_{I_{i-1}}(\cdot,t_i),   \label{opt_T_i}\\
	& -\frac{\partial q|_{I_i}}{\partial t}=\kappa \Delta q|_{I_i}+{\bv|_{I_i}}  \cdot \nabla   q|_{I_i}  +  D^*DT|_{I_i}, \quad	\frac{\partial q|_{I_i}}{\partial n}\Big|_{\Gamma} = 0, \quad q|_{I_i}(t_{i+1})=0,\label{opt_q_i}\\
	&-\gamma\Delta {\bv|_{I_i}}+\nabla p|_{I_i} = q|_{I_i} \nabla  T|_{I_i},\quad \nabla\cdot \bv|_{I_i} = 0, \quad \bv|_{I_i}|_{\Gamma} = 0,\label{opt_cond_i}
\end{align}
where all involved variables are continuously defined  in $I_i$ only.
For simplicity, we will drop the restriction notation $|I_i$ 
in the following time discretization scheme on $I_i$.
Let $T^{i},T^{i+1}, q^i,q^{i+1}, \bv^i,\bv^{i+1}$  denote the finite difference approximation to $T,q,\bv$ at the two end points $t_i,t_{i+1}$ of the sub-interval $I_i$, respectively.
Applying a semi-implicit Euler time scheme with the same step size $\tau$ to the  localized optimality conditions  on $I_i$ we obtain 
a semi-discretized optimality system (dropped the cumbersome notation $|I_i$)
\begin{align}
	& 
     E_{\tau}T^{i+1}=T^i- \tau\bv^{i+1}\cdot \nabla T^i,\quad
      \quad  \frac{\partial T^{i+1}}{\partial n}\Big|_{\Gamma}=0, \quad T^i=T|_{I_{i-1}}(\cdot,t_i) \label{OTD_T}\\ 
	& E_{\tau} q^{i}=q^{i+1}+\tau ( \bv^{i+1}  \cdot \nabla   q^{i+1}  + D^*DT^{i+1}),\quad
	  \frac{\partial q^{i}}{\partial n}\Big|_{\Gamma}=0, \quad q^{i+1}=0, \label{OTD_q}\\\
&-	\gamma\Delta \bv^{i+1}+\nabla p^{i+1} = q^{i} \nabla  T^{i},\quad \nabla\cdot \bv^{i+1} = 0,
 \quad \bv^{i+1}|_{\Gamma} = 0.
\label{OTD_cond}
\end{align}
 Here  the adjoint state $q$  is defined only locally on each time sub-interval $I_i$,  which is different from the global adjoint state on $[0,t_f]$. {The semi-implicit scheme is also applied for the nonlinear term $q\nabla T$ on the right hand side of the optimality condition \eqref{opt_cond_i}. Specifically,  $q$ on the right-hand-side of \eqref{OTD_cond} is chosen to be on $t_i$, which will be  solved backward in $i$. }
  In fact, 
 from \eqref{OTD_q}, using $q^{i+1}=0$  we obtain
$
 q^i=\tau E_\tau^{-1} (D^*DT^{i+1}). \label{2OTD_q}
$
Therefore, the optimality condition becomes
 \begin{align}
- \gamma\Delta \bv^{i+1}+\nabla p^{i+1} & =\tau (E^{-1}_{\tau} D^*DT^{i+1})\nabla  T^{i} , \label{OTD_feedback}
 \end{align}
 which results in  the same nonlinear feedback law as in \eqref{feedback_v}  and so is the closed-loop system \eqref{closed_loop}.
 Such an equivalence is due to  the particular semi-discretization schemes we used in derivation, however,  the outcome may be quite different with other semi-discretization schemes.
\begin{remark}
We notice that the time discretization scheme of the state equations  determines  the resulting  feedback law, how to effectively handle the  discretization of the advective term   is the key in the instantaneous design for this  type of  bilinear control problems. If  a fully implicit time discretization was  applied, it  would generate  a more complicated nonlinear feedback law that causes an additional layer of difficulty  in analyzing   the closed-loop system. 
It is in general also difficult to estimate  the performance of such feedback laws.
\end{remark}

 \subsection{Well-posedness and Asymptotic Behavior   of the Closed-Loop System }
 
 First recall that   the  incompressible velocity field  neither engenders   energy to the system nor  consumes any via pure advection as time evolves. 
The variance    $\|DT\|_{L^2}$  decays exponentially    due to dissipation alone (see Remark \ref{prop_aveT} in Appendix \ref{app}).  
 However,    the feedback law  does  help enhance  cooling or  homogenization of the temperature distribution shown in our numerical experiments  as well as   quantified  by the ``mix-norm" (see Remark \ref{mix}).
Without loss of generality, we assume $\langle T_0\rangle=0$ in the rest of our discussion, then by \eqref{ave_T} we have  $\langle T\rangle=0$ for any $t\in[0, t_f]$. Thus  $D^*DT=T$.  Also,
since   
$$ \mathbb{P} ((E^{-1}_{\tau} T)\nabla T) =\mathbb{P}(  \nabla( (E^{-1}_{\tau}T)T))- T \nabla (E^{-1}_{\tau}T  ))
=- \mathbb{P} ( T \nabla (E^{-1}_{\tau}T  ) ), $$
 the closed-loop system \eqref{closed_loop} becomes 
            \begin{align}
                  \frac{\partial T}{\partial  t} 
      &=\kappa\Delta T+ \frac{\tau}{\gamma} A^{-1}\mathbb{P} (T \nabla (E^{-1}_{\tau} T  ) )\cdot \nabla T, \quad T(0)=T_0.
      \label{2closed_loop}
            \end{align}
Let $\eta=E^{-1}_{\tau} T$ for any $T\in L^2(\Omega)$. Then it is easy to see that $\eta$  satisfies 
\begin{align}
&E_{\tau}\eta=(I-\kappa\tau \Delta)\eta=T, \quad \frac{\partial \eta}{\partial n}\Big|_{\Gamma}=0, \label{eta}
\end{align}
and 
\begin{align}
\|\nabla \eta\|^2_{L^2}
\leq \frac{1}{2\kappa\tau}\|T\|^2_{L^2}.\label{2eta}
\end{align}
With the help of \eqref{2eta} and \eqref{EST_TH1}--\eqref{EST_Tinfty},  we have 
  \begin{align}
  \|A\bv\|^2_{L^2}
   &= \frac{\tau^2}{\gamma^2} \| \mathbb{P} (T \nabla (E^{-1}_{\tau}T) )   \|^2_{L^2}
  \leq C \frac{\tau^2}{\gamma^2} \|T\|^2_{L^{\infty}}\|\nabla (E^{-1}_{\tau}T) )\|^2_{L^2}
 \leq   \frac{C\tau}{\kappa \gamma^2}\|T_0\|^2_{L^{\infty}}\| T_0\|^2_{L^2}, 
  \end{align}
which implies
\begin{align}
 \sup_{t\in[0, t_f]} \|\bv\|^2_{H^2}
    \leq  \frac{C\tau}{ \kappa\gamma^2 }\|T_0\|^2_{L^{\infty}}\| T_0\|^2_{L^2}. \label{opt_vH2}
  \end{align}

Now we are ready to  address the well-posedness and asymptotic behavior  of the closed-loop system.

   \begin{thm}\label{stability}
For $T_0\in H^1(\Omega)\cap L^{\infty}(\Omega)$,  there exists a unique solution to \eqref{2closed_loop}.
Moreover,   if $\frac{\tau}{\gamma^2}$ is sufficiently small,  then there exists a constant $\delta_0>0$ such that 
   \begin{align}
 &  \|\nabla T\|^2_{L^2}\leq  e^{-\delta_0t}\|\nabla T_0\|^2_{L^2},  \label{0Exp_TH1}\\
\quad 
   & \int^{\infty}_0\|\Delta T\|^2_{ L^2}\,dt\leq  C (T_0,\kappa, \gamma, \tau).  \label{1Exp_TH1}
   \end{align}
  In addition,  if $T_0\in H^2(\Omega)$, then there exists  an constant $ \delta_1>0$ such that 
      \begin{align}
     &\|\Delta T\|^2_{L ^2}\leq  e^{-\delta_1t}\|\Delta T_0\|^2_{L^2},
     \label{0Exp_TH2}\\
     \quad 
     & \int^{\infty}_0 \|\nabla (\Delta T)\|^2_{L^2}\,dt\leq  C (T_0,\kappa, \gamma, \tau),
     \label{1Exp_TH2}
   \end{align}
   and 
    \begin{align}
	\|\frac{\partial T}{\partial t}\|_{L^2}
	& \leq  C (\kappa, \gamma, \tau) e^{-\max\{\delta_0, \delta_1 \}t} \|\Delta  T_0\|_{L^2}. \label{Exp_Tt}
\end{align}
\end{thm}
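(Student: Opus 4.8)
The plan is to regard \eqref{2closed_loop} as a semilinear parabolic equation $\partial_t T=\kappa\Delta T-\bv\cdot\nabla T$ in which the feedback velocity $\bv=\frac{\tau}{\gamma}A^{-1}\mathbb{P}((E^{-1}_{\tau}T)\nabla T)$ is a nonlocal, quadratic and strongly smoothing function of $T$. The single structural fact driving every assertion is that each nonlinear contribution carries a factor of $\bv$, and by \eqref{opt_vH2} together with \eqref{2eta}, the maximum principle \eqref{EST_Tinfty} and the energy identity \eqref{EST_TH1} one has the uniform-in-time bound $\|\bv\|^2_{H^2}\le \frac{C\tau}{\kappa\gamma^2}\|T_0\|^2_{L^\infty}\|T_0\|^2_{L^2}=O(\tau/\gamma^2)$. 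Since $\langle T\rangle=0$ is conserved, the Neumann dissipation supplies a spectral gap through Poincar\'e's inequality; hence for $\tau/\gamma^2$ small the advection is dominated by dissipation, which is exactly what produces the exponential rates $\delta_0,\delta_1$.

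For existence and uniqueness I would set up a Galerkin approximation in the eigenbasis $\{e_k\}$ of the Neumann Laplacian, which additionally satisfies $\partial_n(\Delta e_k)=0$, so that the higher-order testing used below is legitimate at the discrete level. The a priori estimates of the next steps furnish uniform bounds of the approximations in $L^\infty(0,t_f;H^1(\Omega))\cap L^2(0,t_f;H^2(\Omega))$, and Aubin--Lions compactness lets me pass to the limit in the smoothing feedback term. Uniqueness follows by taking the difference $w=T_1-T_2$ of two solutions with velocities $\bv_1,\bv_2$, which satisfies $\partial_t w=\kappa\Delta w-\bv_1\cdot\nabla w-(\bv_1-\bv_2)\cdot\nabla T_2$; testing with $w$, the term $\int_\Omega(\bv_1\cdot\nabla w)w\,dx$ vanishes by incompressibility (Lemma \ref{lem0}), while the quadratic structure of the feedback map yields a Lipschitz bound $\|\bv_1-\bv_2\|_{L^\infty}\le C(\|T_1\|,\|T_2\|)\|w\|_{L^2}$, after which Gr\"onwall gives $w\equiv 0$.

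For \eqref{0Exp_TH1}--\eqref{1Exp_TH1} I would test \eqref{2closed_loop} with $-\Delta T$. The advection contribution is handled exactly as in the proof of Lemma \ref{lem1}: using $\nabla\cdot\bv=0$ and $\bv|_\Gamma=0$ it reduces to $-\int_\Omega\partial_j v_i\,\partial_i T\,\partial_j T\,dx$, bounded by $C\|\nabla\bv\|_{L^2}\|\nabla T\|_{L^2}\|\Delta T\|_{L^2}$. Inserting $\|\nabla\bv\|^2_{L^2}\le C\|A\bv\|^2_{L^2}\le \frac{C\tau}{\kappa\gamma^2}\|T_0\|^2_{L^\infty}\|T_0\|^2_{L^2}=:K$ and using Young's inequality to absorb $\tfrac{\kappa}{2}\|\Delta T\|^2_{L^2}$ leaves
\begin{align*}
\frac{d}{dt}\|\nabla T\|^2_{L^2}+\kappa\|\Delta T\|^2_{L^2}\le \frac{CK}{\kappa}\|\nabla T\|^2_{L^2}.
\end{align*}
Since $\langle T\rangle=0$ forces the Poincar\'e inequality $\|\nabla T\|_{L^2}\le C_P\|\Delta T\|_{L^2}$, the dissipation dominates the right-hand side when $\tau/\gamma^2$ is small, giving $\frac{d}{dt}\|\nabla T\|^2_{L^2}\le -\delta_0\|\nabla T\|^2_{L^2}$ with $\delta_0=\kappa/C_P^2-CK/\kappa>0$; Gr\"onwall yields \eqref{0Exp_TH1}, and integrating the displayed inequality over $(0,\infty)$ with the established decay of $\|\nabla T\|^2_{L^2}$ yields \eqref{1Exp_TH1}.

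The $H^2$ bounds \eqref{0Exp_TH2}--\eqref{1Exp_TH2} follow the same template, now testing with $\Delta^2 T$ at the Galerkin level, which gives $\tfrac12\frac{d}{dt}\|\Delta T\|^2_{L^2}+\kappa\|\nabla\Delta T\|^2_{L^2}=(\nabla(\bv\cdot\nabla T),\nabla\Delta T)$; expanding $\nabla(\bv\cdot\nabla T)=(\nabla\bv)\cdot\nabla T+\bv\cdot\nabla\nabla T$ and using the Sobolev embeddings $H^2\hookrightarrow L^\infty,\,W^{1,4}$ produces a bound $C\|\bv\|_{H^2}(\|\nabla T\|_{L^2}+\|\Delta T\|_{L^2})\|\nabla\Delta T\|_{L^2}$, which after Young and the Poincar\'e inequality $\|\Delta T\|_{L^2}\le C\|\nabla\Delta T\|_{L^2}$ (legitimate since $\langle\Delta T\rangle=0$) closes to $\frac{d}{dt}\|\Delta T\|^2_{L^2}\le-\delta_1\|\Delta T\|^2_{L^2}$ for small $\tau/\gamma^2$; this gives \eqref{0Exp_TH2} and, upon integration, \eqref{1Exp_TH2}. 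Estimate \eqref{Exp_Tt} is then immediate from the equation via $\|\partial_t T\|_{L^2}\le\kappa\|\Delta T\|_{L^2}+\|\bv\|_{L^\infty}\|\nabla T\|_{L^2}$, inserting the two exponential rates and $\|\nabla T_0\|_{L^2}\le C\|\Delta T_0\|_{L^2}$, the resulting rate being governed by the slower of the two exponentials. The hard part will be the $H^2$ step: one must check that \emph{every} term produced by the top-order advection estimate genuinely carries a factor $\|\bv\|_{H^2}=O(\sqrt{\tau/\gamma^2})$ so that it can be absorbed by the spectral gap, and one must justify the integration by parts against $\Delta^2 T$ under the Neumann condition, which is precisely why I work in the Neumann eigenbasis where $\partial_n\Delta(\cdot)=0$ also holds.
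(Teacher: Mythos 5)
Your proposal is correct and follows essentially the same route as the paper's proof: the same uniform bound \eqref{opt_vH2} giving $\|\bv\|_{H^2}^2=O(\tau/\gamma^2)$, the same $H^1$ and $H^2$ energy estimates (as in Lemma \ref{lem1} and \eqref{3EST_TH2}) with the Poincar\'e spectral gap absorbing the advection term for $\tau/\gamma^2$ small, and the same difference-plus-Gr\"onwall uniqueness argument exploiting the quadratic, smoothing structure of the feedback map (you bound the velocity difference in $L^\infty$, while the paper bounds it in $L^2$ by duality against $\psi\in D(A)$ with Agmon's inequality — a cosmetic variant). Your explicit Galerkin construction in the Neumann eigenbasis is a mild strengthening of the paper's existence step, which simply appeals to Lemma \ref{lem1}.
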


\begin{proof}
With the help of Lemma  \ref{lem1}, it suffices to show the uniqueness of the solution.  
We first assume that there are two solutions $T_1$ and $T_2$ satisfying 
 \eqref{2closed_loop} and let  $\bv_i$ be the velocity corresponding to $T_i, i=1,2$. Set $\theta=T_1-T_2$ and  $W=\bv_1-\bv_2$, then $\theta$ and $W$ satisfy  
    \begin{align}
    	\begin{split}
	&\frac{\partial \theta}{\partial t}=\kappa \Delta \theta-\bv_1 \cdot \nabla \theta-W\cdot \nabla T_2,
	\quad \frac{\partial \theta}{\partial n}\Big|_{\Gamma}=0, \label{closed_theta}\\
	&\theta(x,0)=0.
	\end{split}
\end{align}
Taking the inner product of \eqref{closed_theta} with $\theta$ follows
    \begin{align*}
	&\frac{1}{2}\frac{d\|\theta\|^2_{L^2}}{dt}+\kappa \|\nabla \theta\|_{L^2}=(-W\cdot \nabla T_2, \theta)=(T_2, W\cdot \nabla \theta)\\
	&\qquad\leq \| T_2\|_{L^\infty}\|W\|_{L^2}\|\nabla \theta\|_{L^2}
	\leq \| T_2\|^2_{L^\infty}\|W\|^2_{L^2}+\frac{\kappa}{2}\|\nabla \theta\|_{L^2}.
\end{align*}
Thus
     \begin{align}
	&\frac{d\|\theta\|^2_{L^2}}{dt}+\kappa \|\nabla \theta\|_{L^2}\leq C\| T_0\|^2_{L^\infty}\|W\|^2_{L^2}, \label{closed_theta}
\end{align}
where 
     \begin{align}
	\|W\|^2_{L^2}&=\|\bv_1-\bv_2\|^2_{L^2}
	=\frac{\tau}{\gamma} \|A^{-1} \mathbb{P} (\theta  \nabla (E^{-1}_{\tau}T_1)
	+T_2  \nabla (E^{-1}_{\tau}\theta) )   \|^2_{L^2}. \label{EST_W}
	\end{align}
Applying \eqref{2eta} to the right hand side of \eqref{EST_W} yields 
     \begin{align}
		&\|A^{-1} \mathbb{P} (\theta  \nabla (E^{-1}_{\tau}T_1)
	+T_2  \nabla (E^{-1}_{\tau}\theta) )   \|^2_{L^2} 
	=\left(\sup_{\psi\in D(A)}\frac{\int_{\Omega}[\mathbb{P} (\theta  \nabla (E^{-1}_{\tau} T_1)
	+T_2  \nabla (E^{-1}_{\tau} \theta)) ]\psi\, dx}{\|\psi\|_{H^2}}\right)^2\nonumber\\
	&\leq \left(\sup_{\psi\in D(A)}\frac{C ( \|\theta\|_{L^2}  \|\nabla (E^{-1}_{\tau} T_1)\|_{L^2}
	+\|T_2\|_{L^2} \|\nabla (E^{-1}_{\tau} \theta)\|_{L^2} ) \|\psi\|_{L^\infty}}{\|\psi\|_{H^2}}\right)^2 \label{2EST_W}\\
	&\leq  C ( \|\theta\|^2_{L^2} \frac{1}{2\kappa\tau}\|T_0\|^2_{L^2}
	+\|T_0\|^2_{L^2}\frac{1}{2\kappa\tau}\|\theta\|^2_{L^2} )
	\leq C \frac{1}{\kappa\tau}\| T_0\|^2_{L^2}\|\theta\|^2_{L^2},\label{3EST_W}
\end{align}
where from \eqref{2EST_W} to \eqref{3EST_W} we used  Agmon's inequality (cf.~\cite{temam1995navier}) that
\begin{align}
\| \psi\|_{L^{\infty}} \leq C\|\psi\|_{H^{d/2+\epsilon}},  d=2,3,   \quad \forall \epsilon>0. \label{agmon}
\end{align}
Thus \eqref{closed_theta} satisfies 
     \begin{align}
	&\frac{d\|\theta\|^2_{L^2}}{dt}
	\leq  \frac{C}{\gamma \kappa }  \| T_0\|^2_{L^\infty} \|T_0\|^2_{L^2}\|\theta\|^2_{L^2}. \label{2closed_theta}
\end{align}
Since $\|\theta_0\|_{L^2}=0$, by Gr\"{o}nwall inequality it is clear that  $\|\theta\|_{L^2}=0$. Therefore, the uniqueness of the solution is established.

To see \eqref{0Exp_TH1}--\eqref{1Exp_TH2}, we  first recall  the {\it a priori} estimates on $\|\nabla T\|_{L^2}$ and $\|\Delta T\|$ obtained in  \eqref{00EST_TH1} and Corollary \ref{cor1}.    
Using \eqref{00EST_TH1} together  with  Poncar\'{e} inequality 
 and \eqref{opt_vH2} we have     
            \begin{align}
         &     
       \frac{d \|\nabla T\|^2_{L^2}}{dt}
      +C \kappa \|\nabla T\|^2_{L^2}
      \leq
      \frac{d \|\nabla T\|^2_{L^2}}{dt}
      +\kappa \|\Delta T\|^2_{L^2} \nonumber\\
    &\qquad\quad \leq C \|\nabla \bv\|^2_{L^2}\|\nabla   T\|^2_{L^2}  
      \leq \frac{C\tau}{\kappa\gamma^2} \|T_0\|^2_{L^{\infty}}\| T_0\|^2_{L^2}\|\nabla   T\|^2_{L^2}, \label{Int_TH2}
\end{align}      
 which implies that if  $\frac{\tau}{\gamma^2}$      
    is chosen sufficiently small such that
   \[ C\kappa - \frac{C\tau}{\kappa \gamma^2} \|T_0\|^2_{L^{\infty}}\| T_0\|^2_{L^2}\ \geq \delta_0>0, \]
   then \eqref{0Exp_TH1} holds. Moreover, from \eqref{Int_TH2} we can easily  verify \eqref{1Exp_TH1}.

   In addition, in light of \eqref{3EST_TH2} we also have 
   \begin{align*}
&\frac{d\|\Delta  T\|^2_{L^2}}{dt}+\kappa  \|\Delta T\|^2_{L^2}
 \leq \frac{d\|\Delta  T\|^2_{L^2}}{dt}+C \kappa \|\nabla (- \Delta  T)\|^2_{L^2}  \\
&\qquad \leq C\|\bv\|^2_{H^2}\| \Delta  T\|^2_{L^2}
  \leq  \frac{C\tau}{\kappa\gamma^2} \|T_0\|^2_{L^{\infty}}\| T_0\|^2_{L^2}\|\Delta    T\|^2_{L^2}. 
\end{align*}
Analogously, if $\frac{\tau}{\gamma^2}$       
    is  sufficiently small  such that
\[ C\kappa -  \frac{C\tau}{\kappa\gamma^2} \|T_0\|^2_{L^{\infty}}\| T_0\|^2_{L^2} \geq \delta_1>0, \]
then  \eqref{0Exp_TH2}--\eqref{1Exp_TH2} hold.  
Consequently, 
 \begin{align*}
	\|\frac{\partial T}{\partial t}\|_{L^2}& \leq  \kappa \|\Delta T\|_{L^2}+\|\bv \cdot \nabla T \|_{L^2}
	 \leq \kappa e^{-\delta_1t}\|\Delta T_0\|^2_{L^2} 
	+ \frac{C\tau^{1/2}}{ \kappa^{1/2}\gamma} \|T_0\|_{L^{\infty}}\| T_0\|_{L^2}e^{-\delta_0t}\|\nabla  T_0\|^2_{L^2},
\end{align*}
which yields \eqref{Exp_Tt}. This completes the proof.
\end{proof}

\begin{remark}\label{mix}

Note that the estimates in \eqref{0Exp_TH1}--\eqref{Exp_Tt} only provide upper bounds for the decay rates of the temperature evolution, which also hold when $\tau$ is set to be zero, i.e., $\bv=0$ or no advection. However, our numerical results indicate that the feedback law always performs better than ``do nothing" with properly chosen  parameters.  On the other hand, if using the negative Sobolev norm, or equivalently, the dual norm $(H^{s}(\Omega))'$, for any  $s>0$,  as the ``mix-norm" for  quantifying homogenization  of a scalar field, which are sensitive for   both diffusion and pure advection  effects (cf.~\cite{  hu2018boundary, hu2018boundarySICON, hu2018approximating, hu2019approximating, lin2011optimal, lunasin2012optimal, mathew2005multiscale, thiffeault2012using}), we realize that  the decay rate of $\|T\|_{(H^{1}(\Omega))'}$ is indeed enhanced by the nonlinear feedback law. 
To see this, taking the inner product of \eqref{2closed_loop} with  $\eta=E^{-1}_{\tau} T$ defined in \eqref{eta} and using \eqref{2EST_tri},   we obtain
            \begin{align*}
             &  \frac{1}{2}   \frac{d  (\|\eta \|^2_{L^2}+\kappa \tau \|\nabla \eta\|_{L^2})}{d  t} 
             +\kappa\|\nabla \eta\|^2_{L^2}+\kappa^2\tau \|\Delta \eta\|^2_{L^2}
     = \frac{\tau}{\gamma} (A^{-1}\mathbb{P} (T \nabla (E^{-1}_{\tau}T  ) )\cdot \nabla T, E^{-1}_{\tau} T)\\
&\qquad=-\frac{\tau}{\gamma} (A^{-1}\mathbb{P} (T \nabla (E^{-1}_{\tau} T  ) ), \mathbb{P} (T\nabla (E^{-1}_{\tau} T)))
=-\frac{\tau}{\gamma} \|A^{-1/2}\mathbb{P} (T \nabla (E^{-1}_{\tau} T  ) )\|^2_{L^2},
            \end{align*}
 and therefore,
        \begin{align}
             &  \frac{1}{2}   \frac{d  (\|\eta \|^2_{L^2}+\kappa \tau \|\nabla \eta\|^2_{L^2})}{d  t} 
             +\kappa\|\nabla \eta\|^2_{L^2}+\kappa^2\tau \|\Delta \eta\|^2_{L^2}
+\frac{\tau}{\gamma} \|A^{-1/2}\mathbb{P} (T \nabla (E^{-1}_{\tau} T  ) )\|^2_{L^2}=0. \label{EST_eta}
            \end{align}
  Similarly,   if $\bv=0$, let $\eta=(I-\Delta )T$ in $\Omega$ with $\frac{\partial \eta}{\partial n}|_{\Gamma}=0$. Then 
      \begin{align}
             &  \frac{1}{2}   \frac{d  (\|\eta \|^2_{L^2}+ \|\nabla \eta\|^2_{L^2})}{d  t} 
             +\kappa\|\nabla \eta\|^2_{L^2}+\kappa \|\Delta \eta\|^2_{L^2}=0.\label{decay_v0}
            \end{align}
Since  $\|T\|_{(H^{1}(\Omega))'}$   is equivalent to $\| \eta\|_{H^1}$ for a fixed $\tau>0$, compared to \eqref{decay_v0} it is clear that  the decay rate of $\| \eta\|_{H^1} $  is accelerated  in \eqref{EST_eta} with the presence of the positive nonlinear  term by setting $\tau=\frac{1}{\kappa}$. However, due to the complexity of the nonlinearity together with the Leray projector, it is rather challenging to have a  thorough  understanding of   this nonlinear mechanism in enhancing convection-cooling or the homogenization process. 
\end{remark}

\section{Numerical examples} \label{Num}
In this section we present some numerical examples to validate the performance of our control designs.
We will iteratively solve the nonlinear optimality system in Theorem \ref{thm1}  via the standard Picard iteration (with the linearization of the velocity filed $\bv$):
\begin{eqnarray}\label{PicardIter}
	\begin{cases}
		\frac{\partial T^{(k+1)}}{\partial t}=\kappa \Delta T^{(k+1)}-\bv^{(k)} \cdot \nabla T^{(k+1)}, \quad \frac{\partial T^{(k+1)}}{\partial n}|_{\Gamma} = 0, \quad T^{(k+1)}(0)=T_0\\
		-\frac{\partial q^{(k+1)}}{\partial t}=\kappa \Delta q^{(k+1)}+{\bv^{(k)}}  \cdot \nabla   q^{(k+1)}  + \beta D^*DT^{(k+1)}, \quad	\frac{\partial q^{(k+1)}}{\partial n}|_{\Gamma} = 0, \quad q^{(k+1)}(T)=\alpha D^*DT^{(k+1)}(t_f),\\
		-\gamma\Delta {\bv^{(k+1)}}+\nabla p^{(k+1)} = q^{(k+1)} \nabla  T^{(k+1)},\quad \nabla\cdot \bv^{(k+1)} = 0, \quad \bv^{(k+1)}|_{\Gamma} = 0,
	\end{cases}
\end{eqnarray}
where  $\bv^{(k)}$ denotes the velocity field at $k$-th Picard iteration with  $\bv^{(0)}$ being a given zero initial guess.
In implementation of the Picard iteration, we will use a uniform mesh with center finite difference scheme in space (with a step size $\Delta x=1/N_x$ and $\Delta y=1/N_y$ in  $x$ and $y$ direction respectively) and semi-implicit Euler scheme in time (with a step size $\Delta t=t_f/N_t$),
where the Stokes equation is discretized by the MAC scheme. 
Clearly, the Picard iteration is expensive since it consists of forward marching in $T$, backward marching in $q$, and solving $N_t$ Stokes equations over all time points.
Define a nonlinear iterative mapping $G: \bv^{(k)} \to \bv^{(k+1)}$.
If the above Picard iteration is assumed to converge  in certain norm under suitable assumptions (e.g. $\gamma$ is not too small), that is,
$\lim_{k\to\infty} \bv^{(k)}=\bv$ exists,
then the Picard iteration essentially finds
a fixed point $\bv$ of the nonlinear mapping $G$, i.e., 
$
\bv=G(\bv).
$
Since our problem is non-convex, such a fixed point in general may not be unique, and which fixed point the Picard iteration may (locally) converge to depends highly on the  initial guess and the  numerical implementation method (such as the used discretization schemes).
For faster convergence, we will interpolate the coarse mesh  solution as a reasonably good initial guess, where the mesh sizes is doubled in refinement starting with $(N_x,N_y,N_t)=(10,10,10)$.
If convergent, the convergence rate of the Picard iteration can be very slow, depending on the given model parameters.
Anderson acceleration (AA) technique \cite{WalkerAA2011} can be employed to significantly speed up the convergence of the Picard iteration. 
Our numerical results show that such a Picard iteration based on AA technique converges very fast, and its implementation is much simpler than the standard Newton method that requires to solve a large-scale Jacobian system at each iteration. We mention that the local convergence radius of Newton iterations is usually much smaller than that of the Picard iterations, which however can be combined with the Picard iterations. 
More robust nonlinear solvers are desirable for solving the optimality system, which will be part of our future work.

The nonlinear feedback control is more straightforward to compute.
We solve the closed-loop continuous nonlinear parabolic PDEs by a standard semi-implicit Euler scheme in time (with the same step size $\Delta t$), where the nonlinear convection term (desired control) involving a Stokes equation is treated explicitly for better computational efficiency and the same MAC scheme is employed for the underlying Stokes equations.  
The simulation of close-loop feedback control system is expected to be more efficient than the open-loop optimal control whenever the number of Picard iterations for convergence is not small.

All numerical simulations are implemented using MATLAB on a laptop PC with Intel(R) Core(TM) i7-7700HQ CPU@2.80GHz CPU and 32GB RAM, where CPU times  (in seconds) are estimated by the timing functions \texttt{tic/toc}.
The stopping tolerance for the AA-Picard iteration (with 5 memory iterations) is $10^{-5}$.
We choose  the spatial domain $\Omega=(0,1)^2$, the diffusion coefficient $\kappa=0.05$,  the penalty parameter $\gamma=0.025$, and $t_f=1$ in all tested examples. For the feedback control system, we will test a few selected parameter  {$\tau\in\{0.25,0.5,0.75,1\}\subset (0,t_f]$ and then plot the best choice for an illustrative comparison}.
For a fixed $\gamma$, a very small $\tau$ gives little or insignificant control effects, while a very large $\tau$ leads to stronger control that may greatly increase objective functionals.
The optimal choice of parameter $\tau$ seems to be non-trivial and it highly depends on the penalty parameter $\gamma$ and the nonlinearity. 

For the purpose of direct comparison, we write the objective functional into three terms:
\begin{align*}
	J(\bv)&=\underbrace{\frac{\alpha}{2}\|T(x, t_f)-\langle T(x,t_f)\rangle\|^2_{L^2}}_{=:J_\alpha}+
	\underbrace{\frac{\beta}{2}\int^{t_f}_0\|T-\langle T\rangle\|^2_{L^2}\, dt}_{=:J_\beta}
	+\underbrace{\frac{\gamma}{2}\|\bv\|^2_{U_{\text{ad}}}}_{=:J_\gamma},  
\end{align*}
where $J_\alpha\equiv 0$ if choosing $\alpha=0$  
and $J_\gamma\equiv 0$ if there is no control ($\bv=0$).
For a fair comparison, we will only consider the case with $\alpha=0$ in the following examples.
We highlight that the nonlinear feedback control derived in the previous section  is sub-optimal and  its performance may be problem  dependent and also sensitive to the choice of slicing parameter $\tau$,  the control weight $\gamma$, as well as the initial temperature distribution.
Our  current numerical schemes  may  only find local minimizers since a global minimizer for such a non-convex optimization problem is in general difficult (or NP-hard) to find, which requires global optimization techniques that are beyond our reach.
\subsection{Example 1}
The first example uses the smooth initial condition with an oval-shaped bump given by 
\begin{align*}
  T_0(x,y)&=10\left(0.5+\frac{1}{\pi}\arctan\left(10(1-32(x-0.25)^2-16(y-0.25)^2)\right)\right),
\end{align*}
where the initial heated region is located within an ellipse centered at $(0.25,0.25)$.
We compare the control outcomes of three different scenarios: no control,  optimal control and feedback control (with different choices of $\tau$).
Table \ref{Ex1Tab1} reports the attained different objective functionals and control measurements, where `Iter' denotes the number of Picard iterations used for solving the nonlinear optimality system, and the two control measurements are computed as the maximum over $[0, t_f]$ by
$$\VERT\nabla\cdot \bv\VERT_\infty:=\max_{0\le t\le t_f} \|\nabla\cdot \bv(t)\|_{L^2(\Omega)},\qquad \VERT\bv\VERT_\infty:=\max_{0\le t\le t_f} \|\bv(t)\|_{L^2(\Omega)}.$$
We mention that the divergence-free condition $\nabla\cdot \bv=0$ holds only approximately due to discretization errors.
As expected, the computation of feedback control costs much less CPU times than the optimal control (with over 8 million decision variables for velocity field with a $160\times 160\times 160$ mesh).
Figures \ref{fig_ex1_A}--\ref{fig_ex1_B} show the decay of $\|DT(t)\|$ and $\|\bv(t)\|$ and the snapshots of temperature distribution and control velocity field at different time points, respectively.
 {The exponential decay of  $\|DT(t)\|$ with no control is observed which  clearly verifies our analysis (see Remark \ref{prop_aveT}), and the decay rates via controlled advection  are anticipated to be  faster}. 
For this particular example, the feedback control (with the choice $\tau=0.75$) and the optimal  control   provide about 26.2\% and 28.5\% reduction, respectively,  in the objective functionals compared to the case  with no control.  Moreover, both controls (based on very different numerical implementations) generate   very similar dynamical patterns as shown  in Figures \ref{fig_ex1_A}-\ref{fig_ex1_B}. This example also suggests  that the  feedback control law  can be as effective as the optimal control.
Nevertheless, we acknowledge that the optimal choice of parameter $\tau$ is a non-trivial task, which merits further analysis. Numerically we do observe the best choice of $\tau$ lies between 0.5 and 1.
\begin{table}[H]
	\centering
	\caption{Control performance comparison of Example 1 with Neumann BC ($\alpha=0,\beta=1,\gamma=0.025$)}
	
	\begin{tabular}{|c||c|c|c|c|c|c|c|c|c|c}
		\hline
		Control& $(N_x,N_y,N_t)$&  $J(\bv)$&  $J_{\beta}$& $J_{\gamma}$& 
		$\VERT\nabla\cdot \bv\VERT_\infty$& $\VERT\bv\VERT_\infty$& Iter& CPU \\
		\hline 
		\multirow{1}{*}{None ($\bv=0$)} 
&(160,160,160)	 &1.559	   &1.559	 &0.000	 &0.000	 &0.00	 &--	 &15.0 \\ 
\hline
\multirow{1}{*}{Optimal}  
&(160,160,160)	 &\textbf{1.114}	  	 &0.852	 &0.263	 &0.006	 &1.89	 &21	 &766.5 \\

\hline
		\multirow{1}{*}{Feedback ($\tau=0.25$)}  
&(160,160,160)	 &1.380	  	 &1.352	 &0.028	 &0.010	 &0.57	 &--	 &230.2 \\ 	
		\multirow{1}{*}{Feedback ($ {\tau=0.5}$)}   
&(160,160,160)	 &1.170	  	 &1.011	 &0.159	 &0.014	 &1.22	 &--	 &228.9 \\  
 
		\multirow{1}{*}{Feedback ($\tau=0.75$)}  
&(160,160,160)	 &\textbf{1.150}	 &0.838	 &0.312	 &0.017	 &1.96	 &--	 &229.4 \\ 	

		\multirow{1}{*}{Feedback ($\tau=1.0$)}  
&(160,160,160)	 &1.207	  	 &0.757	 &0.449	 &0.020	 &2.62	 &--	 &229.4 \\ 	


		\hline  
		
	\end{tabular}
	\label{Ex1Tab1}
\end{table}

\begin{figure}[H]
	\centering 
	\includegraphics[width=1\textwidth]{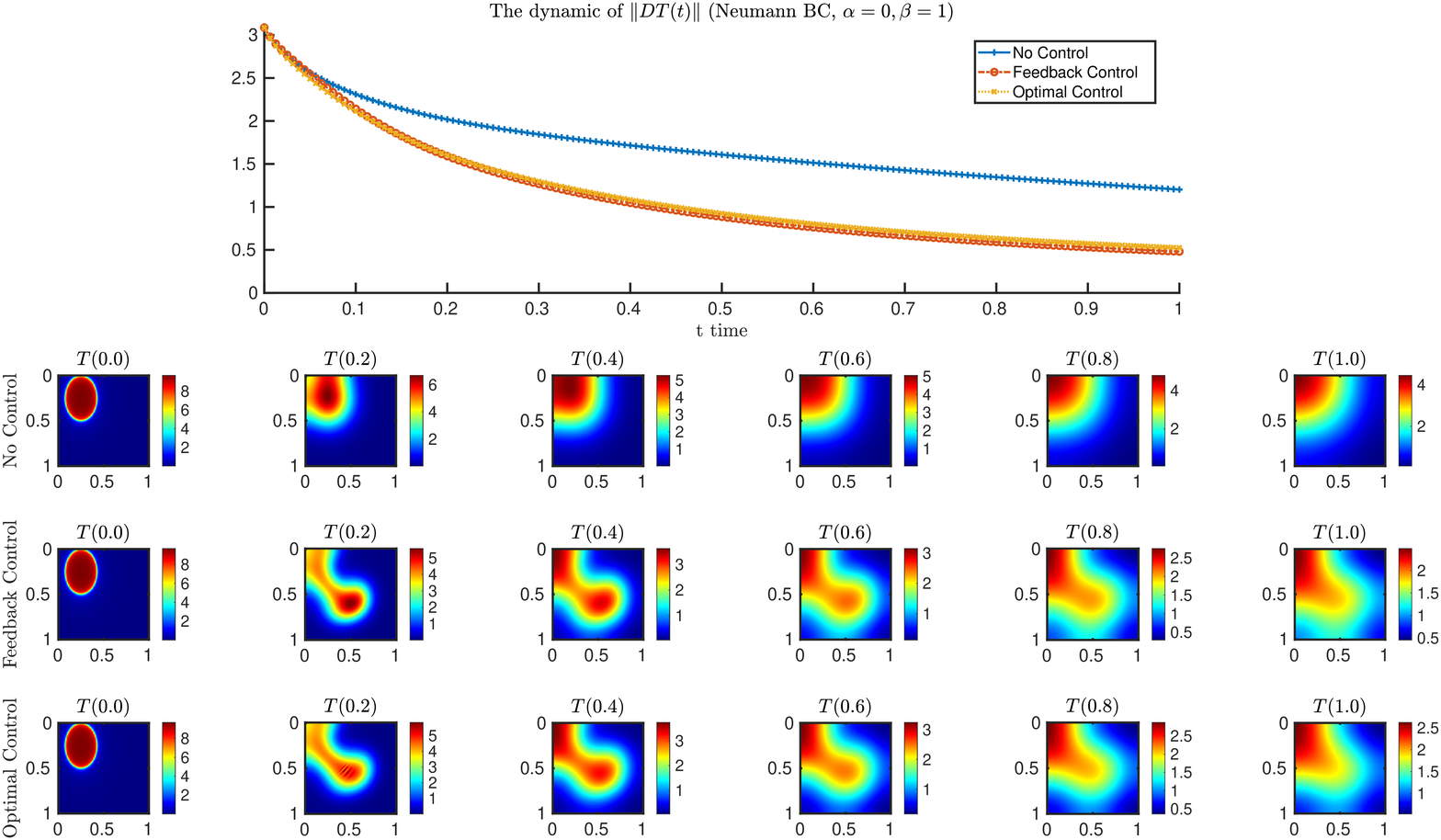}
	\caption{The  snapshots of state $T(t)$ at different time points for Example 1 ($t_f=1,\tau=0.75$,$\alpha=0,\beta=1$).} \label{fig_ex1_A}
\end{figure}

\begin{figure}[H]
	\centering 
	\includegraphics[width=1\textwidth]{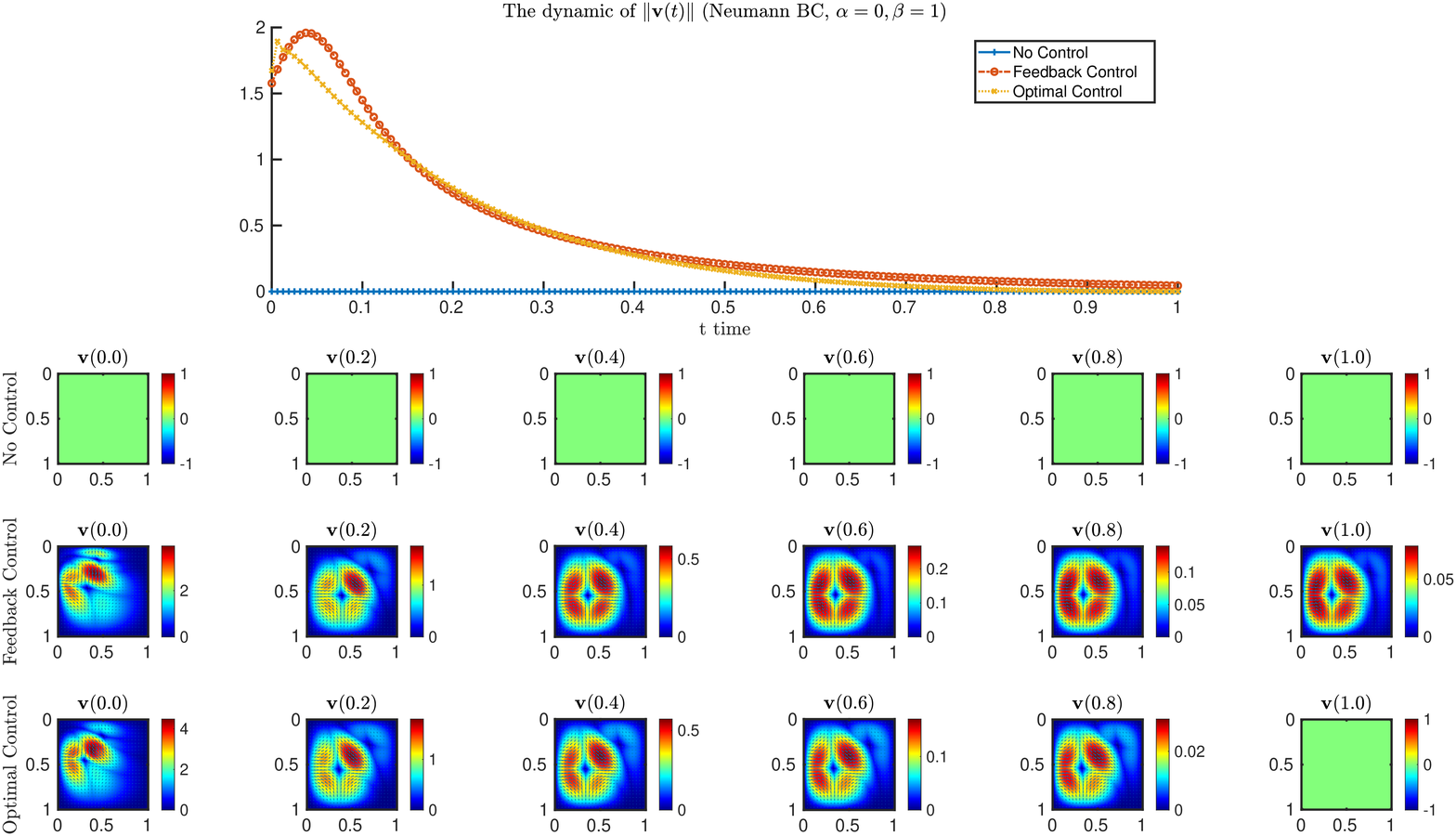}
	\caption{The snapshots of control $\bv(t)$ at different time points  for Example 1 ($t_f=1,\tau=0.75$,$\alpha=0,\beta=1$).} \label{fig_ex1_B}
\end{figure}
 
\subsection{Example 2}
The second example considers the smooth initial condition with two oval-shaped bumps defined by 
\begin{align*}
	T_0(x,y)&=10\left(0.5+\frac{1}{\pi}\arctan\left(10(1-32(x-0.25)^2-16(y-0.25)^2)\right)\right) \\
	&+10\left(0.5+\frac{1}{\pi}\arctan\left(10(1-32(x-0.75)^2-16(y-0.25)^2)\right)\right),
\end{align*}
where the two heated regions are located within two ellipses centered at $(0.25,0.25)$ and $(0.75,0.25)$.
Table \ref{Ex2Tab1} reports the attained different objective functionals and control measurements. Figures \ref{fig_ex2_A}--\ref{fig_ex2_B} present the decay  of $\|DT(t)\|$ and $\|\bv(t)\|$  and the snapshots of temperature distribution and control velocity field at different time points, respectively.  
Similar to Example 1, the feedback control (with the choice $\tau=0.75$) and optimal  control  provide   about 26.2\% and 29.4\% reduction, respectively, in the objective functionals compared to  the case  with no control.
However, Figure \ref{fig_ex2_A} demonstrates  that different controls may lead to  very different evolution of temperature distribution.

\begin{table}[H]
	\centering
	\caption{Control performance comparison of Example 2 with Neumann BC ($\alpha=0,\beta=1,\gamma=0.025$)}
	
	\begin{tabular}{|c||c|c|c|c|c|c|c|c|c|c}
		\hline
		Control& $(N_x,N_y,N_t)$&  $J(\bv)$&   $J_{\beta}$& $J_{\gamma}$& 
		$\VERT\nabla\cdot \bv\VERT_\infty$& $\VERT\bv\VERT_\infty$& Iter& CPU \\
		\hline 
		\multirow{1}{*}{None ($\bv=0$)} 
		 &(160,160,160)	 &2.296	  	 &2.296	 &0.000	 &0.000	 &0.00	 &--	 &13.2 \\ 
		\hline
		\multirow{1}{*}{Optimal}  
		 &(160,160,160)	 &\textbf{1.622}	  	 &1.163	 &0.460	 &0.013	 &1.95	 &14	 &528.6 \\
		
		\hline
		\multirow{1}{*}{Feedback ($\tau=0.25$)}  
		&(160,160,160)	 &2.030	  	 &1.990	 &0.040	 &0.013	 &0.61	 &--	 &223.3 \\  	
		\multirow{1}{*}{Feedback ($ {\tau=0.5}$)}   
	 &(160,160,160)	 &1.766	  	 &1.557	 &0.209	 &0.017	 &1.32	 &--	 &223.5 \\ 
		
		\multirow{1}{*}{Feedback ($\tau=0.75$)}  
	 &(160,160,160)	 &\textbf{1.695}	  	 &1.269	 &0.427	 &0.020	 &1.93	 &--	 &225.4 \\ 
		
		\multirow{1}{*}{Feedback ($\tau=1.0$)}  
	 &(160,160,160)	 &1.730	  	 &1.091	 &0.638	 &0.023	 &2.46	 &--	 &224.5 \\ 
 
		\hline  
		
	\end{tabular}
	\label{Ex2Tab1}
\end{table}

\begin{figure}[H]
	\centering 
	\includegraphics[width=1\textwidth]{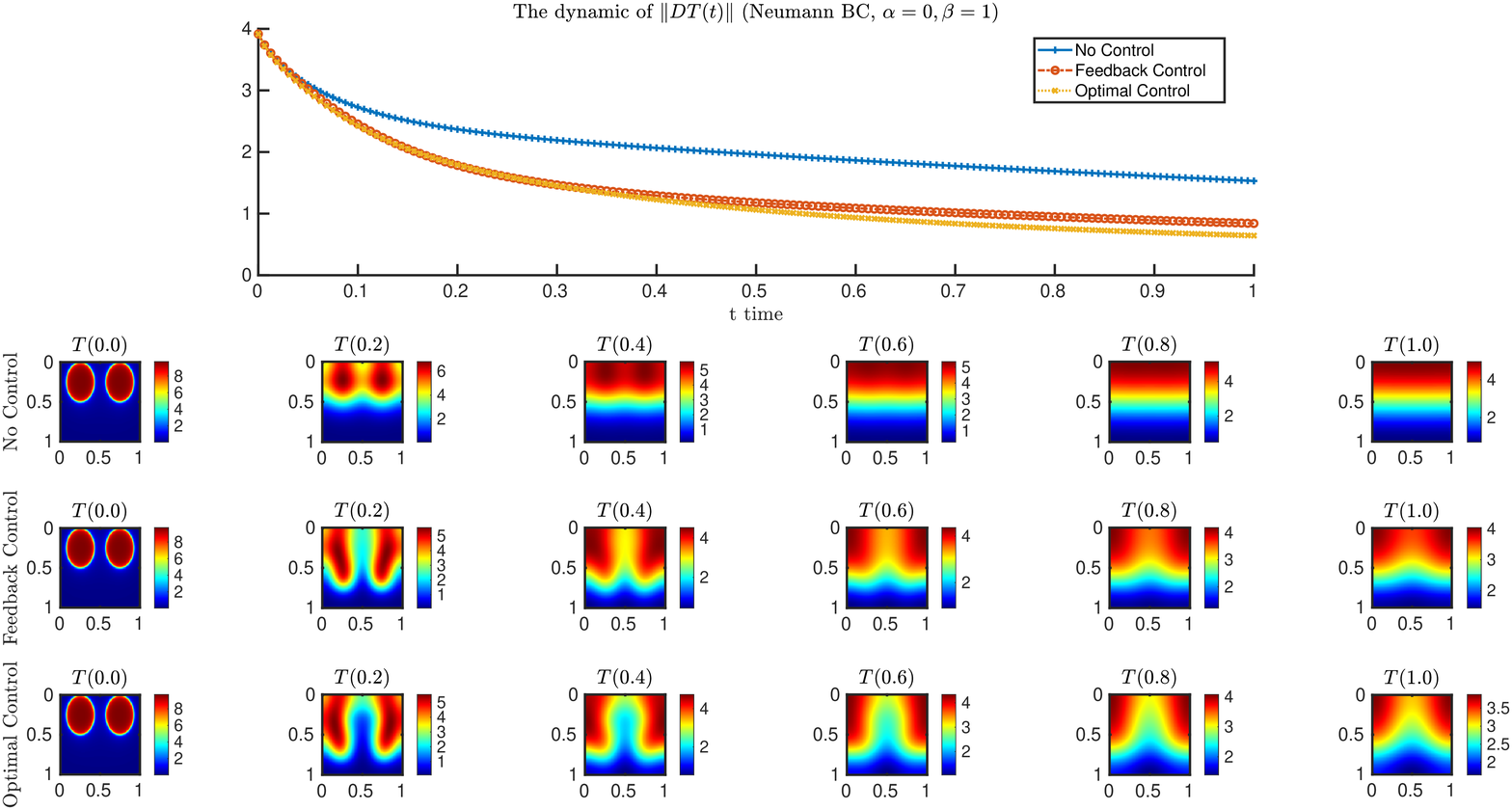}
	\caption{The  snapshots of state $T(t)$ at different time points for Example 2 ($t_f=1,\tau=0.75$,$\alpha=0,\beta=1$).} \label{fig_ex2_A}
\end{figure}

\begin{figure}[H]
	\centering 
	\includegraphics[width=1\textwidth]{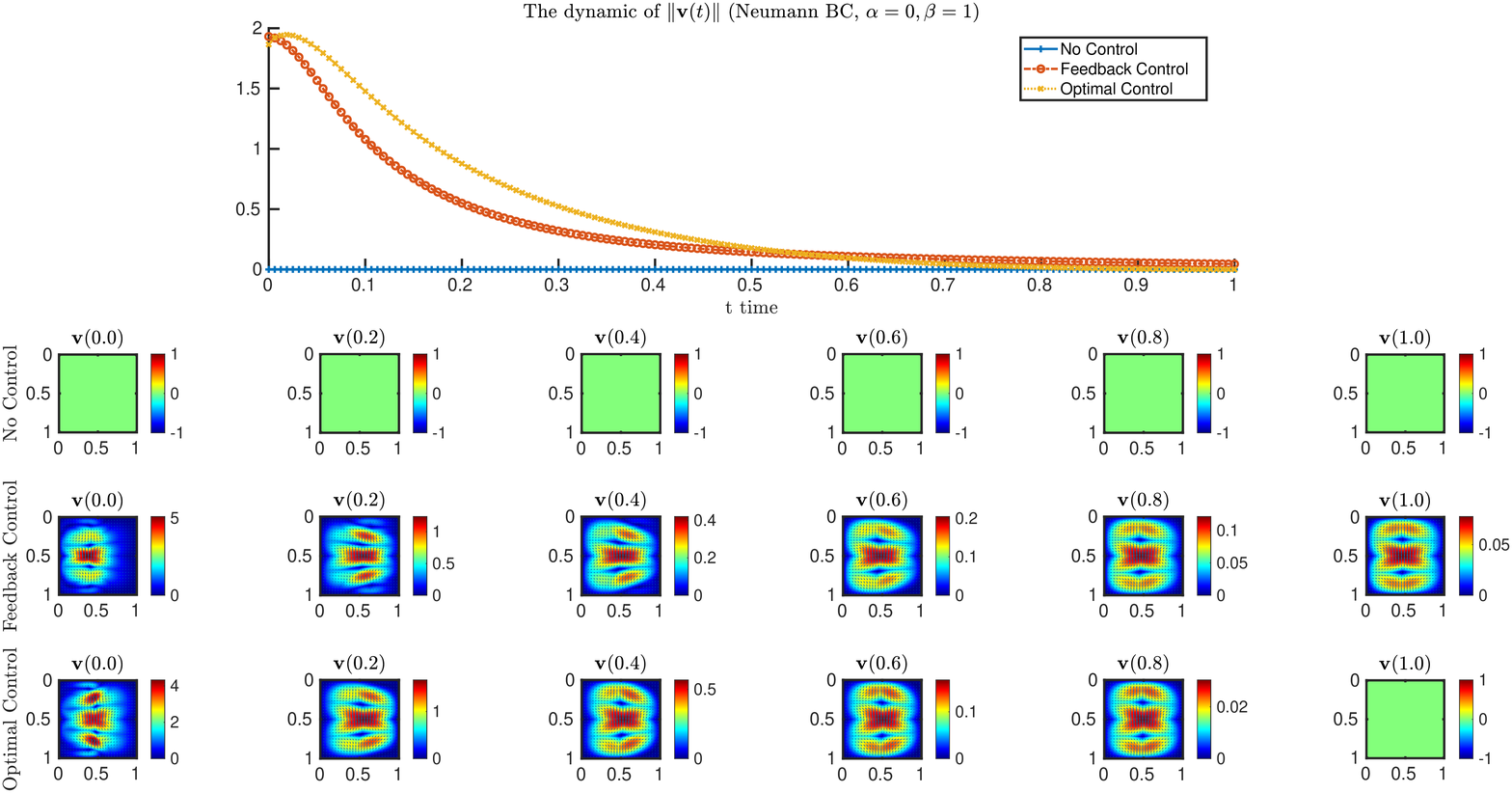}
	\caption{The snapshots of control $\bv(t)$ at different time points  for Example 2 ($t_f=1,\tau=0.75$,$\alpha=0,\beta=1$).} \label{fig_ex2_B}
\end{figure}

 \subsection{Example 3}
 For the sake of numerical test, the third example examines  the initial condition with two squared bumps given by 
 \[
 T_0(x,y)=10\times \mathds{1}_{S} ,
 \] 
 with $S=[0,0.5)^2\cup (0.5,1]^2$ and $\mathds{1}$ denotes the indicator function. 
 In this case, the initial condition $T_0$ is indeed discontinuous, 
 but it will be quickly smoothed out due to diffusion.
 Table \ref{Ex3Tab1} reports the attained different objective functionals and control measurements. Figures \ref{fig_ex3_A}--\ref{fig_ex3_B} present  the decay of $\|DT(t)\|$ and $\|\bv(t)\|$ and the snapshots of temperature distribution and control velocity field at different time points, respectively.
 Compared with no control, the optimal control provides 22.8\% reduction in  $J(\bv)$, while the feedback control (with $\tau=1$) attains only 7.7\% reduction in $J(\bv)$.
 The controlled dynamics demonstrate quite different pattern during the early stage.
 Again, the computation of optimal control costs about three times longer CPU time than the feedback control. 
 This example shows that  the sub-optimal feedback control may be far away from being optimal.
 Similar results can be obtained with the corresponding smoothed initial condition (e.g. use smooth rounded squares as heated source).
 \begin{table}[H]
 	\centering
 	\caption{Control performance comparison of Example 3 with Neumann BC ($\alpha=0,\beta=1,\gamma=0.025$)}
 	
 	\begin{tabular}{|c||c|c|c|c|c|c|c|c|c|c}
 		\hline
 		Control& $(N_x,N_y,N_t)$&  $J(\bv)$&  $J_{\beta}$& $J_{\gamma}$& 
 		$\VERT\nabla\cdot \bv\VERT_\infty$& $\VERT\bv\VERT_\infty$& Iter& CPU \\
 		\hline 
 		\multirow{1}{*}{None ($\bv=0$)} 
 &(160,160,160)	 &3.950	   &3.950	 &0.000	 &0.000	 &0.00	 &--	 &12.8 \\ 
 		\hline
 		\multirow{1}{*}{Optimal}  
&(160,160,160)	 &\textbf{3.049}	  &2.144	 &0.905	 &0.019	 &2.41	 &19	 &741.3 \\
 		\hline
 		\multirow{1}{*}{Feedback ($\tau=0.25$)}  
 	&	(160,160,160)	 &3.942	  	 &3.941	 &0.002	 &0.000	 &0.07	 &--	 &231.2 \\ 
 		 
 		\multirow{1}{*}{Feedback ({$\tau=0.5$})}  
 	&	 (160,160,160)	 &3.919	  	 &3.901	 &0.018	 &0.001	 &0.17	 &--	 &229.6 \\ 
 		
 		\multirow{1}{*}{Feedback ($\tau=0.75$)}  
 	&	 (160,160,160)	 &3.805	  	 &3.602	 &0.203	 &0.002	 &0.66	 &--	 &230.0 \\ 
 		\multirow{1}{*}{Feedback ($\tau=1.0$)}  
 	&	 (160,160,160)	 &\textbf{3.647}	  	 &3.090	 &0.557	 &0.004	 &1.38	 &--	 &230.0 \\  
 \hline
 	 	\multirow{1}{*}{Feedback ($\tau=1.25$)}  
 	&(160,160,160)	 &\underline{3.599}	  	 &2.750	 &0.849	 &0.007	 &1.99	 &--	 &228.5 \\ 
 	  	 	\multirow{1}{*}{Feedback ($\tau=1.5$)}  
 	&(160,160,160)	 &3.617	  	 &2.536	 &1.081	 &0.009	 &2.47	 &--	 &228.6 \\ 
 		\multirow{1}{*}{Feedback ($\tau=1.75$)} 
 	 	&(160,160,160)	 &3.660	   &2.391	 &1.269	 &0.011	 &2.87	 &--	 &228.7 \\
 		\hline  
 		
 	\end{tabular}
 	\label{Ex3Tab1}
 \end{table}

 \begin{figure}[H]
 	\centering 
 	\includegraphics[width=1\textwidth]{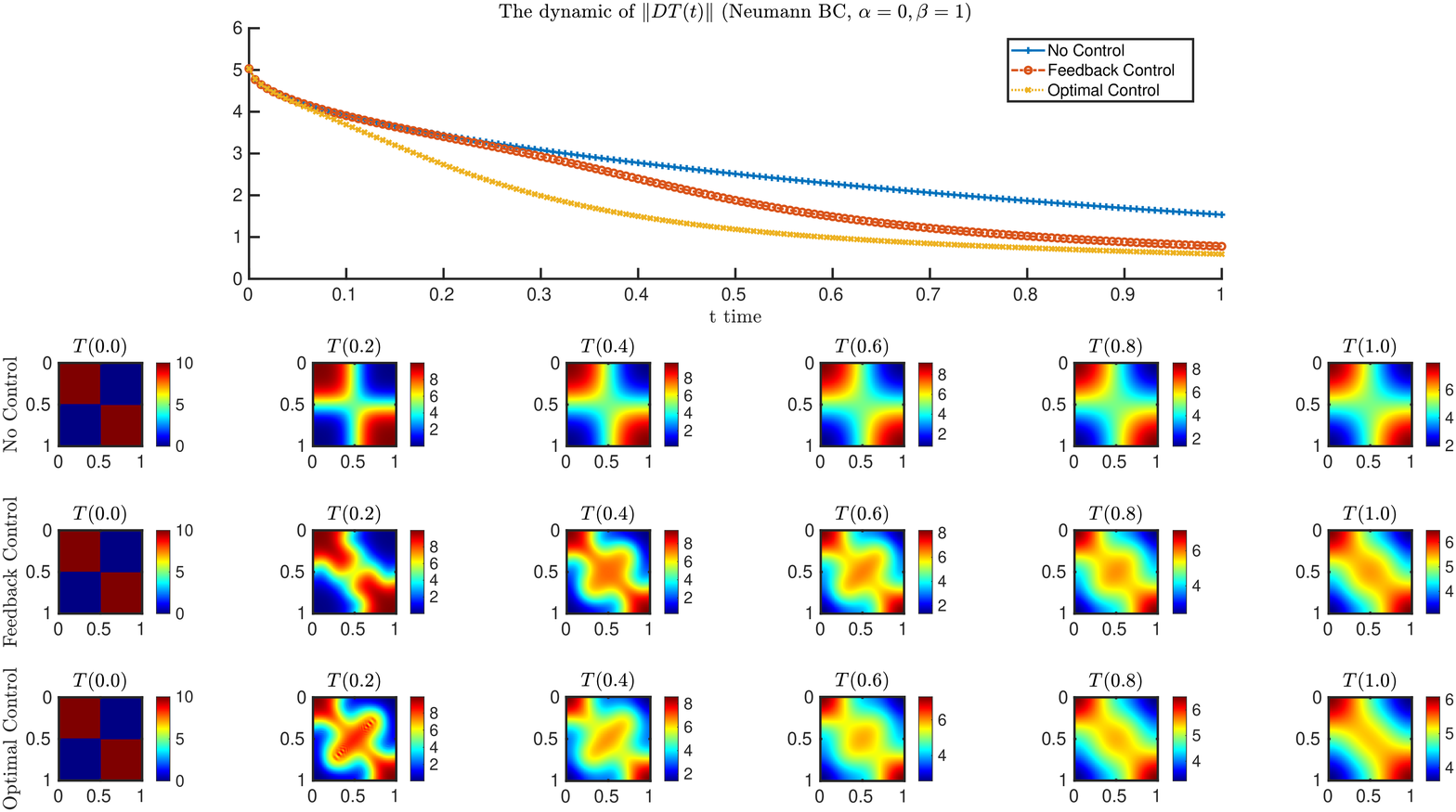}
 	\caption{The  snapshots of state $T(t)$ at different time points for Example 3 ($t_f=1,\tau=1$,$\alpha=0,\beta=1$).} \label{fig_ex3_A}
 \end{figure}
 
 \begin{figure}[H]
 	\centering 
 	\includegraphics[width=1\textwidth]{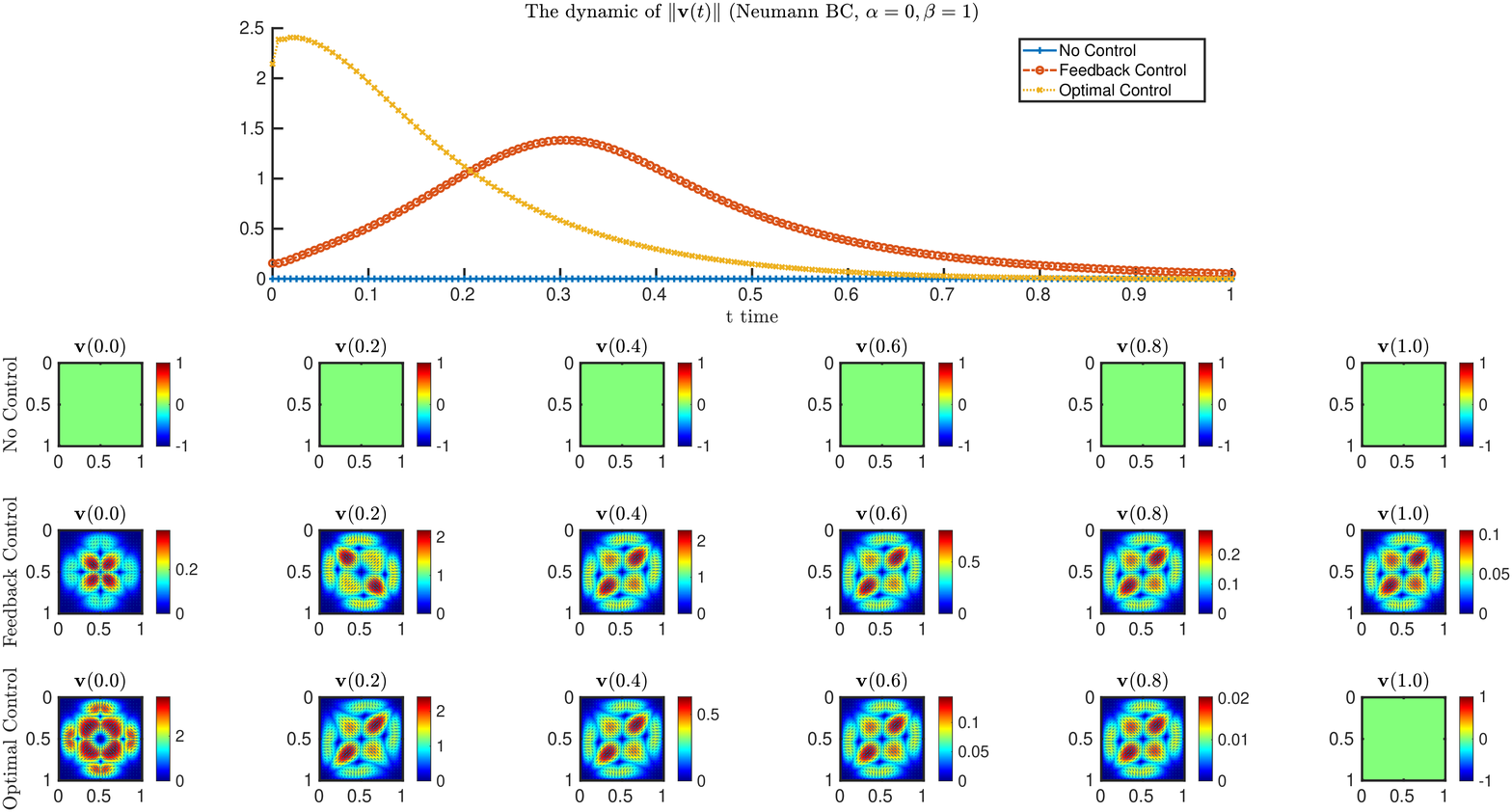}
 	\caption{The snapshots of control $\bv(t)$ at different time points  for Example 3 ($t_f=1,\tau=1$,$\alpha=0,\beta=1$).} \label{fig_ex3_B}
 \end{figure}

 {
  	To illustrate how the performance of feedback control depends on the key parameter $\tau\ge 0$,  we plot in Figure  \ref{fig_ex3_C} the values of
  	$J(\bv)$ as a function of $\tau\in[0,2]$.
  	It shows the best choice of $\tau$ lies in the open interval $(1.2,1.4)$. This can also be seen from the last three rows in Table \ref{Ex3Tab1}, where the feedback control with $\tau=1.25$ provides a slightly smaller $J(\bv)$ than with $\tau=1.0$.
  	Based on the previous examples, the best value of $\tau>0$ seems to be problem dependent, which may not necessarily be less than $t_f=1$, although it was originated as a step size. 
  }
  \begin{figure}[H]
  	\centering 
  	\includegraphics[width=0.8\textwidth]{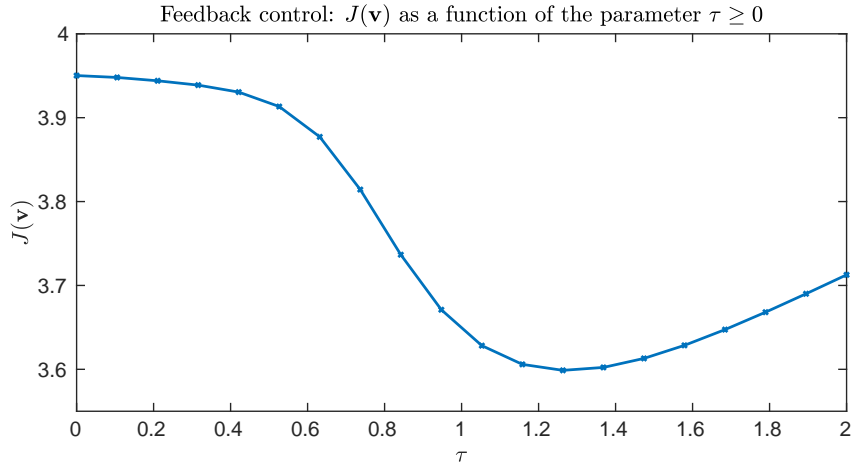}
  	\caption{Feedback control: the value of $J(\bv)$ as a function of the parameter $\tau\in[0,2]$ for Example 3 ($t_f=1$,$\alpha=0,\beta=1$).} \label{fig_ex3_C}
  \end{figure}
  
\section{Conclusions}

In the current work, we have discussed  both optimal and feedback controls for convection-cooling  via incompressible fluid flows.  First and second necessary optimality conditions were derived for solving and characterizing the optimal control. 
Motivated by the method   of instantaneous  control,  we investigated the idea of directly  constructing  the  feedback  laws by making use of the optimality conditions together with  numerical discretization schemes. Our numerical experiments demonstrated the effectiveness of the different control designs. In particular, the sub-optimal feedback control demonstrates  comparable performances as the optimal control in some cases.  However, there is no rigorous proof for justifying the optimality of the  feedback law. Understanding how exactly the mechanism   of the nonlinear feedback law plays in the enhancement of   convection-cooling or homogenization of a general scalar field, especially, its relation to  the diffusivity $\kappa$, the parameter $\tau$ as well as  the control weight $\gamma$, requires a more in-depth analysis. The aforementioned issues  will be investigated in our future work. 
\section*{Appendix}\label{app}

%
\noindent\textbf{Proof of Corollary  \ref{cor1}.}
\begin{proof}
First, with the help of Lemma \ref{lem1}, \eqref{EST_qinfty}, and the optimality   condition \eqref{opt_v} we have
\begin{align}
&\int^{t_f}_0\|\bv\|^2_{H^2} \,dt\leq C\int^{t_f}_0\|q\nabla T\|^2_{L^2}\,dt
\leq C\sup_{t\in[0, t_f]}\|q\|^2_{L^\infty}\int^{t_f}_0\|\nabla T\|^2_{L^2}\,dt\leq C(T_0, t_f). \label{EST_opt_v}
\end{align}

Moreover, by \eqref{0EST_TH1} and \eqref{EST_opt_v} we have
\begin{align}
&\sup_{t\in [0, t_f]}\|\bv\|_{H^2} \leq C\sup_{t\in [0, t_f]}\|q\nabla T\|_{L^2}
\leq C\sup_{t\in[0, t_f]}\|q\|_{L^\infty}\sup_{t\in [0, t_f]} \|\nabla T\|_{L^2}\leq C(T_0, t_f). \label{2EST_opt_v}
\end{align}
To  obtain a higher regularity of $T$,  we take the inner product of \eqref{opt_T} with $(-\Delta)^2T$ and get 
\begin{align*}
&\frac{1}{2}\frac{d\|\Delta  T\|^2_{L^2}}{dt}+\kappa \|\nabla (-\Delta T)\|^2_{L^2} 
=-(\bv\cdot \nabla T, (-\Delta )^2T)
=(\nabla (\bv\cdot \nabla T), \nabla ((-\Delta )T))\nonumber\\
&\qquad\quad\leq C \|\nabla (\bv\cdot \nabla T)\|_{L^2}\|\nabla ((-\Delta)T\|_{L^2}
\leq  C (\|\nabla \bv\cdot  \nabla T\|_{L^2}+\| \bv\cdot  \nabla (\nabla T)\|_{L^2})\|\nabla ((-\Delta)T\|_{L^2}\nonumber\\
&\qquad\quad \leq C (\|\nabla \bv\|^2_{H^1}\|\Delta T\|^2_{L^2}+\| \bv\|^2_{L^\infty}\| \Delta  T\|^2_{L^2})
+\frac{\kappa}{2}\|\nabla ((-\Delta)T\|^2_{L^2}. 
\end{align*}
This 
 follows
\begin{align}
&\frac{d\|\Delta  T\|^2_{L^2}}{dt}+\kappa \|\nabla ((-\Delta)T\|^2_{L^2} 
\leq C (\|\nabla \bv\|^2_{H^1}+\| \bv\|^2_{L^\infty})\| \Delta  T\|^2_{L^2}
\leq C\|\bv\|^2_{H^2}\| \Delta  T\|^2_{L^2}, \label{3EST_TH2}
\end{align}
where  we used  Among's inequality \eqref{agmon}  in the last inequality.
Therefore, applying \eqref{EST_opt_v} to \eqref{3EST_TH2} yields 
\begin{align}
& \sup_{t\in[0, t_f]} \|\Delta  T\|_{L^2}
\leq e^{C\int^{t_f}_0\|\bv\|^2_{H^2}\,dt}\| \Delta  T_0\|_{L^2}<\infty \label{4EST_TH2}
\end{align}
and
\begin{align}
&\kappa \int^{t_f}_0  \|\nabla ((-\Delta)T\|^2_{L^2} \,dt
\leq C\int^{t_f}_0\|\bv\|^2_{H^2}\| \Delta  T\|^2_{L^2}\,dt<\infty. \label{5EST_TH2}
\end{align}
This completes the proof.
\end{proof}

%
\noindent\textbf{Proof of Theorem \ref{SONC}.}
\begin{proof}
Let $h_i\in U_{\text{ad}}$  and $z_i=T'(\bv)\cdot h_i, i=1,2$. Then  we have 
\begin{equation}\label{z-equation_i}
	\begin{split}
		&\frac{\partial z_i}{\partial t}=\kappa \Delta z_i  - \bv \cdot \nabla z_i - h_i \cdot \nabla T,\quad \frac{\partial z_i}{\partial n}|_{\Gamma}=0,\\
		&z(x, 0)=0.
	\end{split}
\end{equation}
In light of  Corollary \ref{cor1}, we can  also obtain a higher regularity of  $z_i, i=1,2,$ than   \eqref{EST_zL2}.
To see this, taking the inner produce of \eqref{z-equation_i}  with $-\Delta z_i$ follows
 \begin{align}
		&\frac{1}{2}\frac{d \|\nabla z_i\|^2_{L^2}}{d t}+\kappa \|\Delta z_i\|^2_{L^2}\leq \|\bv\|_{L^\infty} \| \nabla z_i\|_{L^2}\|\Delta z_i\|_{L^2}
		+\|h_i\|_{L^4}\|\nabla T\|_{L^4} \|\Delta z_i\|_{L^2}\nonumber\\
		&\qquad\leq C \|\bv\|^2_{L^\infty} \| \nabla z_i\|^2_{L^2}
		+C\|\nabla h_i\|^2_{L^2}\|\Delta T\|^2_{L^2} +\frac{\kappa}{2}\|\Delta z_i\|_{L^2}. \label{0EST_zH1}
\end{align}
Thus
 \begin{align*}
		&\frac{d \|\nabla z_i\|^2_{L^2}}{d t}+\kappa \|\Delta z_i\|^2_{L^2}
		\leq C \|\bv\|^2_{L^\infty} \| \nabla z_i\|^2_{L^2}
		+C\|\nabla h_i\|^2_{L^2}\|\Delta T\|^2_{L^2},
\end{align*}
where by \eqref{4EST_TH2},
\[\int^{t_f}_0\|\nabla h_i\|^2_{L^2}\|\Delta T\|^2_{L^2}\,dt
\leq  \sup_{t\in[0, t_f]}\|\Delta T\|^2_{L^2} \int^{t_f}_0\|\nabla h_i\|^2_{L^2}\,dt \leq C(T_0, t_f) \|h_i\|^2_{U_{\text{ad}}}. \]
Consequently, 
 \begin{align}
		& \sup_{t\in[0, t_f]} \|\nabla z_i\|^2_{L^2}\leq 
		\int^{t_f}_0e^{C\int^{t_f}_{\tau} \|\bv\|^2_{L^\infty}\,ds}\|\nabla h_i\|^2_{L^2}\|\Delta T\|^2_{L^2} \, d\tau
		 \leq C(T_0, t_f) \|h_i\|^2_{U_{\text{ad}}} \label{EST_zH1}
\end{align}
and 
 \begin{align*}
		\kappa \int^{t_f}_0\|\Delta z_i\|^2_{L^2}\leq C \int^{t_f}_0  (\|\bv\|^2_{L^\infty} \| \nabla z_i\|^2_{L^2}
		+\|\nabla h_i\|^2_{L^2}\|\Delta T\|^2_{L^2})\, dt\leq   C(T_0, t_f) \|h\|^2_{U_{\text{ad}}}. 
\end{align*}

Next,  let  $Z=z'_1(\bv)\cdot h_2$. 
 Then $Z$ satisfies 
\begin{align}
	&\frac{\partial Z}{\partial t}=	\kappa \Delta Z -  h_2 \cdot \nabla z_1 -\bv \cdot \nabla Z-h_1 \cdot \nabla z_2,
	\quad Z|_{\Gamma}=0,\label{Z-equation}\\
		 &Z(x, 0)=0. \nonumber
\end{align}
Applying an $L^2$-estimate for $Z$ gives 
\begin{align*}
&\frac{1}{2}\frac{d\| Z\|^2_{L^2}}{dt}+\kappa \|\nabla Z\|^2_{L^2}
\leq\|\nabla h_2\|_{L^2} \|\nabla z_1\|_{L^2}\|\nabla Z\|_{L^2}
+\|\nabla h_1\|_{L^2}\|\nabla z_2\|_{L^2}\|\nabla Z\|_{L^2}\\
&\qquad\leq \|\nabla h_2\|^2_{L^2} \| \nabla z_1\|^2_{L^2}+\frac{\kappa}{4}\|\nabla Z\|^2_{L^2}
+\|\nabla h_1\|^2_{L^2} \| \nabla z_2\|^2_{L^2}+\frac{\kappa}{4}\|\nabla Z\|^2_{L^2},
\end{align*}
which,  together with \eqref{EST_zH1}, follows
\begin{align*}
&\frac{d\| Z\|^2_{L^2}}{dt}+\kappa \|\nabla Z\|^2_{L^2}
\leq C(\|\nabla h_2\|^2_{L^2} \| \nabla z_1\|^2_{L^2}+\|\nabla h_1\|^2_{L^2} \| \nabla z_2\|^2_{L^2} )\\
&
\qquad\leq C(T_0, t_f)(  \|\nabla h_2\|^2_{L^2} \|h_1\|^2_{U_{\text{ad}}}
+ \|\nabla h_1\|^2_{L^2} \|h_2\|^2_{U_{\text{ad}}}).
\end{align*}
 Therefore, 
\begin{align}
&\| Z\|^2_{L^2}+\kappa\int^{t}_0\|\nabla Z\|^2_{L^2}\,dt 
 \leq C(T_0, t_f) \| h_1\|^2_{U_{\text{ad}}} \|h_2\|^2_{U_{\text{ad}}}, \quad t\in[0,t_f].
 \label{EST_ZH1}
\end{align}
By Lemma \ref{lem0}, \eqref{EST_zL2} and \eqref{EST_ZH1},  it can be easily verified   that  the terms on the right hand side of \eqref{Z-equation} are all in $L^1(0, t_f; (H^{1}(\Omega))')$, and hence  $ \frac{\partial Z}{\partial t} \in L^1(0, t_f; (H^{1}(\Omega))')$. Thus there exists a unique solution to \eqref{Z-equation}, which implies that $T(\bv)$ is twice G$\hat{a}$teaux differentiable at $\bv\in U_{\text{ad}}$ satisfying the optimality condition \eqref{opt_cond}, with respect to $h_1$ and $h_2$, so is $J(\bv)$.

Now differentiating $J'(\bv)\cdot h_1$ once again in the direction $h_2\in U_{\text{ad}}$ gives
\begin{align}
		J''(\bv) \cdot (h_1,h_2) =&\alpha (D^*Dz_2(t_f), z_1(t_f)) + \alpha (D^*DT(t_f), Z(t_f))
		+\beta \int^{t_f}_0(D^*Dz_2, z_1)\,dt\nonumber\\
		&+ \beta \int^{t_f}_{0} (D^*DT, Z)\,dt
		+ \gamma \int^{t_f}_0(Ah_2, h_1)\,dt.\label{J_2diff}
\end{align}
Next  taking  the  inner product of \eqref{Z-equation} with $q$ and applying 
 \eqref{2EST_tri}, we get
\begin{align*}
		&	\alpha (D^*DT(t_f), Z(t_f))-\int^{t_f}_0(Z, \frac{\partial q}{\partial t})\,dt=\kappa \int^{t_f}_0(Z, \Delta q)\,dt
		+\int^{t_f}_0( z_1, h_2 \cdot \nabla q) \,dt\\
		&\qquad+\int^{t_f}_0 ( Z, \bv\cdot \nabla q)\,dt +\int^{t_f}_0( z_2, h_1\cdot \nabla q)\,dt.
\end{align*}
With the help of   the adjoint equations  \eqref{opt_q},  we obtain   
\begin{align*}
		\alpha (D^*DT(t_f), Z(t_f))+\beta\int^{t_f}_0(Z, D^*DT)\,dt=\int^{t_f}_0( z_1, h_2 \cdot \nabla q)\,dt+\int^{t_f}_0( z_2, h_1\cdot \nabla q)\,dt.
\end{align*}
Therefore, \eqref{J_2diff} becomes
\begin{align*}
		J''(\bv) \cdot (h_1,h_2) =&	\alpha (D^*Dz_2(t_f), z_1(t_f))  +\beta \int^{t_f}_0(D^*Dz_2, z_1)\,dt
		+\int^{t_f}_0( z_1, h_2 \cdot \nabla q)\,dt\nonumber\\
		&+\int^{t_f}_0( z_2, h_1\cdot \nabla q))\,dt
                 + \gamma\int^{t_f}_0(Ah_2, h_1)\,dt.
\end{align*} 
Setting $h_1=h_2=h$ and $z_1=z_2=z=T'(\bv)\cdot h$ follows
\begin{align}\label{optimality_4}
		J''(\bv) \cdot (h, h) =\alpha \|Dz(t_f)\|^2_{L^2}+\beta \int^{t_f}_0\|Dz\|^2_{L^2}\,dt+2\int^{t_f}_0( z, h\cdot \nabla q)\,dt
		+\gamma\int^{t_f}_0\|A^{1/2}h\|^2_{L^2}\,dt.
\end{align} 
Furthermore, by \eqref{1EST_tri}, \eqref{EST_zL2}, \eqref{EST_qL2} and \eqref{EST_zH1}, we get
\begin{align*}
 \|Dz(t_f)\|^2_{L^2} \leq    \frac{C}{\kappa} \|T_0\|^2_{L^\infty} \| h\|^2_{U_{\text{ad}}},  
\end{align*}
\begin{align*}
		\int^{t_f}_0\|Dz\|^2_{L^2}\,dt \le C \int^{t_f}_0\|\nabla z\|^2_{L^2}\,dt 
		 \le   \frac{C}{\kappa^2} \|T_0\|^2_{L^\infty} \| h\|^2_{U_{\text{ad}}}, 
\end{align*} 
and
\begin{align*}
&\left|\int^{t_f}_0( z, h\cdot \nabla q)\,dt\right| 
\leq  C\int^{t_f}_0 \|\nabla z\|_{L^2}\|\nabla h\|_{L^2}\|\nabla q\|_{L^2}\,dt\nonumber\\
&\qquad\leq    C\sup_{t\in[0, t_f]}\|\nabla z\|_{L^2}(\int^{t_f}_0 \|\nabla h\|^2_{L^2})^{1/2} (\int^{t_f}_0 \|\nabla q\|^2_{L^2}\,dt)^{1/2}
\leq  C(T_0, t_f) \|h\|^2_{U_{\text{ad}}}.
\end{align*}
 As a result, 
\begin{align*}
		|J''(\bv) \cdot (h, h)| &\leq  C(T_0, t_f)\left(   \frac{\alpha}{\kappa}+ \frac{\beta}{\kappa^2}\right) \|T_0\|^2_{L^\infty} \| h\|^2_{U_{\text{ad}}}
		+\gamma\| h\|^2_{U_{\text{ad}}}=( C(T_0, t_f, \kappa,  \alpha, \beta)+\gamma)\| h\|^2_{U_{\text{ad}}}
\end{align*} 
and 
\begin{align*}
	J''(\bv) \cdot (h, h) \geq 
	&-2\int^{t_f}_0( z, h\cdot \nabla q)\,dt
		+\gamma\int^{t_f}_0\|A^{1/2}h\|^2_{L^2}\,dt
		= (\gamma-C(T_0, t_f, \kappa,  \alpha, \beta))\| h\|^2_{U_{\text{ad}}}.
	\end{align*}  
Therefore, letting $\gamma$  large enough such that  
\begin{align}
\gamma-C(T_0, t_f, \kappa,  \alpha, \beta)
  \geq c_0>0, \label{cond_gamma}
\end{align}
 we obtain   \eqref{opt_2rd}.
  \end{proof}

%
%
%

\begin{remark}\label{prop_aveT}
For $T_0\in L^2(\Omega)$ and $\bv\in L^2(0, \infty;, H)$,  $\|DT\|_{L^2}$ obeys an exponential decay rate  in time.  
\end{remark}

\begin{proof}
Taking the inner product of \eqref{sta_T} with $D^*DT$ and applying Greens' formula and \eqref{2EST_tri}, we have
  \begin{align}
\frac{1}{2}	\frac{d \|DT\|^2_{L^2}}{dt}&=\kappa( \Delta T, D^*DT)-(\bv \cdot \nabla T, D^*DT)\nonumber\\
&=\kappa\langle \frac{\partial  T}{\partial n}, D^*DT\rangle \rangle_{\Gamma}
 -\kappa (\nabla T, \nabla (D^*DT) )+(\bv  T, \nabla (D^*DT)).\label{EST_DT}
\end{align}
Since  $\langle T\rangle$ is a  function of $t$ and $D^*D=D$, we have 
$\nabla (D^*DT) =\nabla (DT) =\nabla (T-\langle T\rangle)=\nabla T,$
and  hence  using \eqref{2EST_tri} and Stokes formula follows
$$(\bv \cdot \nabla T, D^*DT)=-(T, \bv \cdot \nabla  (D^*DT))=-(\bv, T\nabla T)=-\frac{1}{2}(\bv, \nabla (T^2))=0.$$
 Therefore, \eqref{EST_DT} becomes 
  \begin{align}
&\frac{1}{2}	\frac{d \|DT\|^2_{L^2}}{dt}+\kappa\|  \nabla (DT)\|^2_{L^2}=0.\label{2EST_DT}
\end{align}
Further applying   Gr\"{o}nwall's  inequality and Poncar\'{e} inequality  
we derive that 
   \begin{align}
\| DT\|^2_{L^2}\leq e^{-C\kappa t}\| DT_0\|^2_{L^2},
\label{EST_TH2}
\end{align}
which establishes the claim. 
\end{proof}

\end{document}